\newtheorem{theorem}{Theorem}[section]
\newtheorem{corollary}[theorem]{Corollary}
\newtheorem{lemma}[theorem]{Lemma}
\theoremstyle{definition} \newtheorem{definition}[theorem]{Definition}
\newtheorem{remark}[theorem]{Remark}
\newtheorem{example}[theorem]{Example}
\newtheorem{notation}[theorem]{Notation}
 \newcommand{\A}{\mathcal{A}}
\newcommand{\E}{\mathbb{E}} \newcommand{\PP}{\mathbb{P}}
 \newcommand{\Q}{\mathbb{Q}}
\newcommand{\R}{\mathbb{R}} 
 \newcommand{\X}{\tilde{X}}
\newcommand{\eps}{\varepsilon} \newcommand{\parr}{\slash\!\slash}
\newcommand\newdot{{\kern.8pt\cdot\kern.8pt}}
\newcommand\bull{{\hbox{\bf .}}}
\DeclareMathOperator{\arcsinh}{arcsinh}
\DeclareMathOperator{\Ric}{Ric} 
\DeclareMathOperator{\Cut}{Cut} \DeclareMathOperator{\tr}{tr}
 \DeclareMathOperator{\Hess}{Hess}
\DeclareMathOperator{\m}{m} 
\DeclareMathOperator{\Id}{id}
\DeclareMathOperator{\Riem}{Riem} \DeclareMathOperator{\defo}{def}
\DeclareMathOperator{\trace}{trace} \DeclareMathOperator{\dist}{dist}
\begin{document}

\title{A stochastic approach to the harmonic map heat flow\\ on
  manifolds with time-dependent Riemannian metric}

\author{Hongxin Guo\footnote{School of Mathematics and Information
    Science, Wenzhou University, Wenzhou, Zhejiang 325035,
    China. E-mail address: \texttt{guo@wzu.edu.cn}}, Robert
  Philipowski\footnote{Corresponding author; Unit\'e de Recherche en
    Math\'ematiques, Universit\'e du Luxembourg, 6,~rue Richard
    Coudenhove-Kalergi, 1359 Luxembourg, Grand Duchy of
    Luxembourg. E-mail address:\ \texttt{robert.philipowski @uni.lu}},
  Anton Thalmaier\footnote{Unit\'e de Recherche en Math\'ematiques,
    Universit\'e du Luxembourg, 6, rue Richard Coudenhove-Kalergi,
    1359 Luxembourg, Grand Duchy of Luxembourg. E-mail address:\
    \texttt{anton.thalmaier@uni.lu}}}

\date{}

\renewcommand{\theequation}{\arabic{section}.\arabic{equation}}
\maketitle

\begin{abstract}
  \noindent
  We first prove stochastic representation formulae for space-time
  harmonic mappings defined on manifolds with evolving Riemannian
  metric. We then apply these formulae to derive Liouville type
  theorems under appropriate curvature conditions.  Space-time
  harmonic mappings which are defined globally in time correspond to
  ancient solutions to the harmonic map heat flow. As corollaries, we
  establish triviality of such ancient solutions in a variety of
  different situations.
  \\\\
  {\bf Keywords:} Harmonic map heat flow, stochastic analysis on manifolds, time-dependent geometry\\
  {\bf AMS subject classification:} 53C44, 58J65
\end{abstract}

\section{Introduction}
A smooth mapping $u\colon M\to N$ between Riemannian manifolds $(M,g)$
and $(N,h)$ is said to be {\em harmonic} if its tension field
$\Delta^{g,h} u\equiv\trace\nabla du$ vanishes, see e.g.\
\cite{Eells-Sampson:64, linwang}. Since harmonic maps are
characterized by the property that they map $M$-valued Brownian
motions to $N$-valued martingales (see e.g.\
\cite[Satz~7.157~(ii)]{hackenbroch}), it is natural to study them
using stochastic methods, and this has been done in a number of
papers, e.g.~\cite{arnaudonlithalmaier, Cranston_Kendall_Kifer:1996,
  kendall88, kendall98, Picard:2002, thalmaierwang}. In particular,
stochastic representation formulae for the differential of harmonic
maps have turned out to be a powerful tool to prove Liouville
theorems, i.e.\ theorems stating that harmonic maps in a certain class
of maps and under certain topological or geometric constraints are
necessarily constant~\cite{thalmaierwang}.

Due to Perelman's proof of the geometrization and hence the Poincar\'e
conjecture using Ricci flow \cite{perelman1, perelman2, perelman3},
there is now a strong interest in studying manifolds $M$ with
time-dependent geometry. In such a context, the notion of harmonic map
turns out to be no longer appropriate; however, it is natural to study
\emph{space-time harmonic maps} which by time reversal provide
solutions to the \emph{harmonic map heat flow} (or \emph{nonlinear
  heat equation}), see e.g.~\cite{Li:2013, mueller,
  Michael_Bradford_Williams}.

The behaviour of (positive) solutions to the linear heat equation
under Ricci flow has been intensively studied during the last decade,
e.g.~\cite{Chow_et_al:2008, morgantian:2007, Mueller:2006}. It is
clear from the static case that in the nonlinear situation under Ricci
flow also the geometry of the target space will naturally play a
crucial role, see e.g.~\cite{linwang, Struwe:88}.

Building on our previous work on martingales on manifolds with
time-dependent connection~\cite{martingales}, we establish stochastic
representation formulae for space-time harmonic maps and solutions to
the harmonic map heat flow defined on a manifold with time-dependent
metric. We then apply these formulae to prove Liouville theorems for
space-time harmonic maps and ancient solutions to the harmonic map
heat flow under appropriate curvature conditions.

\section{Stochastic representation formulae} 
Let $M$ be a differentiable manifold equipped with a smooth family
$g(t)$ of Riemannian metrics ($t\in (T_0, T]$ with $T_0 < T$), and let
$(N,h)$ be a Riemannian manifold. Let $u\colon (T_0, T] \times M \to
N$ be a solution to the harmonic map heat flow
\begin{equation}\label{Eq:heatequation}
  \frac{\partial u}{\partial t} =\frac{1}{2}\Delta^{g(t),h} u.
\end{equation}
Here $\Delta^{g(t),h}u:=\trace\nabla du\in\Gamma(u^*TN)$ denotes the
tension field of $u$ with respect to $g(t)$ and $h$. Recall that, for
fixed $t$, we have the differential
$du(t,\newdot)\in\Gamma(T^*M,u(t,\newdot)^*TN)$ and the Hessian
$(\nabla du)(t,\newdot)\in\Gamma(T^*M\otimes T^*M,u(t,\newdot)^*TN)$
which provides for each $(t,x)$ a bilinear map
$$(\nabla du)(t,x)\colon T_xM\times T_xM\to T_{u(t,x)}N.$$
The trace of this bilinear map gives the tension
$(\Delta^{g(t),h}u)(t,x)\in T_{u(t,x)}N$ of $u$ at $(t,x)$.  Any
solution to Eq.~\eqref{Eq:heatequation} with $T_0=-\infty$ is called
\emph{ancient solution} to the harmonic map heat flow.

\begin{remark} 
  Let $u\colon (T_0, T]\times M\to N$ be a solution to the harmonic
  map heat flow~\eqref{Eq:heatequation} and let
  \[
  \hat u(t,\newdot)=u(T-t,\newdot)\quad\text{and}\quad\hat
  g(t)=g(T-t,\newdot)
  \]
  be defined by time reversal. Then the mapping
  \[
  \hat u\colon[0,T-T_0)\times M\to N
  \]
  is space-time harmonic with respect to the family $\hat g(t)$ of
  metrics, i.e.
  \begin{equation*}
    \frac{\partial\hat u}{\partial t} +\frac{1}{2}\Delta^{\hat g(t),h}\hat u=0.
  \end{equation*}
  In particular, any ancient solution to the harmonic map heat
  flow~\eqref{Eq:heatequation} gives rise to a space-time harmonic map
  defined on $\R_+\times M$, and vice versa. For this reason, in the
  sequel, we formulate our results for space-time harmonic maps;
  however the statements immediately apply to solutions of the
  harmonic map heat flow by time reversal.
\end{remark}

From now on, let $(g(t))_{t\geq0}$ be a smooth family of Riemannian
metrics on $M$ and $(N,h)$ be a Riemannian manifold.  Let $u\colon
[0,\infty)\times M\to N$ be a space-time harmonic map in the sense
that
\begin{equation}\label{Eq:spacetimeharmonic}
  \frac{\partial u}{\partial t} +\frac{1}{2}\Delta^{g(t),h} u=0.
\end{equation}

\begin{notation}
  Fixing a point $x\in M$, let $(X_t)_{t\geq 0}$ be a $g(t)$-Brownian
  motion on $M$ \cite{ACT, Coulibaly, kuwadaphilipowski,
    kuwadaphilipowski2, paeng} starting at $x$, and consider the image
  process $\X_t := u(t, X_t)$ taking values in the target
  manifold~$N$. As in \cite[Theorem~9.3 and Remark~9.4]{martingales}
  let $\Theta_{0,t}: T_x M\to T_{X_t}M$ be the damped parallel
  transport along $X$, defined by the covariant equation
  \begin{equation*}
    d\left(\left(\parr_{0,t}^{\Riem}\right)^{-1}\Theta_{0,t}\right) = -\frac{1}{2}\left(\parr_{0,t}^{\Riem}\right)^{-1}\left(-
      \frac{\partial g}{\partial t} + \Ric^{g(t)}\right)^\#\Theta_{0,t} \,dt,\quad\Theta_{0,0} = \Id_{T_x M},
  \end{equation*}
  where $\parr_{0,t}^{\Riem}: T_xM\to T_{X_t}M$ is the
  Riemann-parallel transport along $X$, see
  \cite[Definition~3.3]{martingales}. Similarly, in terms of the
  Riemann curvature tensor $\tilde R$ on $N$, let
  \[
  \tilde\Theta_{0,t}\colon T_{\X_0}N\to T_{\X_t}N
  \]
  be the damped parallel transport along $\X$, defined by the
  covariant equation
  \begin{align*}
    d\left( \parr_{0,t}^{-1}\tilde\Theta_{0,t}\right) & =  -\frac{1}{2}\parr_{0,t}^{-1}\tilde R(\tilde\Theta_{0,t}, d\X_t)d\X_t\\
    & = -\frac{1}{2}\sum_{i=1}^m\parr_{0,t}^{-1} \tilde
    R\left(\tilde\Theta_{0,t}, du(t, X_t)\xi_t^i\right) du(t, X_t)
    \xi_t^i \, dt,\quad \tilde \Theta_{0,0} =\Id_{T_{\tilde X_0}N},
  \end{align*}
  where $m=\dim M$ and $(\xi_t^1,\ldots,\xi_t^m)$ is an adapted
  $g(t)$-orthonormal basis of $T_{X_t}M$:
  \[
  \xi_t^i=\parr_{0,t}^{\Riem}e_i
  \]
  for some fixed orthonormal basis $(e_1,\ldots,e_m)$ of $T_xM$.
\end{notation}

In \cite[Proposition~9.6]{martingales}, we obtained the following
theorem which is crucial for all subsequent results.

\begin{theorem}\label{martheat}
  For each $v\in T_xM$ the $T_{u(0,x)}N$-valued process
  \begin{equation*}
    \tilde\Theta_{0,t}^{-1}\, du(t, X_t)\,\Theta_{0,t}\, v, \quad t\geq0,
  \end{equation*}
  is a local martingale.
\end{theorem}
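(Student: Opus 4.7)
The plan is to compute the Itô differential of the $T_{u(0,x)}N$-valued process
\[
Y_t := \tilde\Theta_{0,t}^{-1}\, du(t, X_t)\,\Theta_{0,t}\, v
\]
and verify that its drift vanishes identically. The three time-dependent building blocks $du(t,X_t)$, $\Theta_{0,t}$ and $\tilde\Theta_{0,t}$ each contribute a finite-variation part, and the whole point of the covariant ODEs defining $\Theta$ and $\tilde\Theta$ is to cancel the curvature terms produced by the middle factor.

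First I would decompose $Y_t$ by trivializing tangent spaces along $X$ with the Riemann-parallel transport $\parr^{\Riem}_{0,t}$ on $M$ and along $\X$ with $\parr_{0,t}$ on $N$. This reduces the problem to analyzing a process taking values in the fixed vector space $T_{u(0,x)}N$, which can be handled by classical Itô calculus. The martingale parts of $Y_t$ will automatically take care of themselves; what has to be checked is that the $dt$-term is zero.

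Next I would compute $d\bigl[\parr_{0,t}^{-1}\, du(t,X_t)\,\parr^{\Riem}_{0,t}\bigr]$ using the Itô formula for sections along a $g(t)$-Brownian motion from \cite{martingales}. Three contributions show up: (i) a time derivative $\partial_t(du)$, (ii) a term $(\nabla du)(t,\newdot)$ applied to the drift/quadratic-variation of $X$, and (iii) a genuine martingale part built from $\nabla du$ tested against $dX_t$. The key analytic input is the Weitzenböck-type commutation formula
\[
\trace \nabla^2 (du) = \nabla(\trace \nabla du) + (\text{source curvature terms}) + (\text{target curvature terms}),
\]
together with the time-dependent Ricci-type identity for $g(t)$-Brownian motion that replaces $\Ric^{g(t)}$ by $(-\partial_t g + \Ric^{g(t)})^{\#}$. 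Combined with the space-time harmonic equation $\partial_t u + \tfrac12 \Delta^{g(t),h} u = 0$, this rewrites the drift of $du(t,X_t)\,\Theta_{0,t} v$ as the sum of a term of the form
\[
-\tfrac12\,du(t,X_t)\bigl(-\partial_t g+\Ric^{g(t)}\bigr)^{\#}\Theta_{0,t} v
\]
arising from the source and a term $-\tfrac12\,\tilde R(du(t,X_t)\Theta_{0,t}v,d\X_t)d\X_t$ arising from the target.

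The final step is to feed this into Itô's product formula for $\tilde\Theta_{0,t}^{-1}$ and $\Theta_{0,t}$. By the very definition of $\Theta_{0,t}$ the source curvature drift is exactly cancelled by the finite-variation part of $\Theta_{0,t}$, and by the definition of $\tilde\Theta_{0,t}$ the target curvature drift is cancelled by $\tilde\Theta_{0,t}^{-1}$. Thus the drift of $Y_t$ vanishes and $Y_t$ is a local martingale. The main obstacle is the bookkeeping in the second paragraph: one has to apply Itô's formula to a section of a bundle over an evolving base, track how the time derivative of the metric enters through the quadratic variation of $X$, and correctly identify the target-curvature term that appears when tracing a second covariant derivative of a vector-bundle-valued one-form. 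Once that Weitzenböck identity is in place, the cancellations in the last step are essentially algebraic.
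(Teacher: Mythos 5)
The paper does not actually contain a proof of Theorem~\ref{martheat}: it is quoted verbatim from \cite[Proposition~9.6]{martingales}, and all of the technical infrastructure (the It\^o formula for sections of $u^*TN$ along a $g(t)$-Brownian motion, the definition and properties of $\Theta_{0,t}$ and $\tilde\Theta_{0,t}$) lives there. So there is no in-paper argument to compare against; what can be said is that your outline is precisely the strategy one would expect to find in that reference, and conceptually it is sound. Trivializing by $\parr^{\Riem}_{0,t}$ and $\parr_{0,t}$, computing the drift of $\parr_{0,t}^{-1}\,du(t,X_t)\,\parr^{\Riem}_{0,t}$ via a Weitzenb\"ock-type commutation, substituting the space-time harmonic equation, and then watching the source and target curvature drifts cancel against the defining ODEs of $\Theta_{0,t}$ and $\tilde\Theta_{0,t}^{-1}$ in the product rule is the right chain of moves; note in particular that since both damped parallel transports are finite-variation, there are no extra quadratic covariation terms to worry about, which is what makes the product rule so clean.

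Two smaller remarks. First, the phrase ``the drift of $du(t,X_t)\,\Theta_{0,t}v$'' in your middle paragraph is sloppy and carries a sign problem: if you really mean the drift of the product $du(t,X_t)\Theta_{0,t}v$, then the source-curvature contribution has \emph{already} cancelled against the finite-variation part of $\Theta_{0,t}$ and only the target term $-\tfrac12\tilde R(du\,\Theta_{0,t}v, d\X_t)d\X_t$ survives; if instead you mean the contribution of $d(du)$ alone acting on the spectator $\Theta_{0,t}v$, then the sign of the source term must be $+\tfrac12\,du\,(-\partial_tg+\Ric)^{\#}\Theta_{0,t}v$ in order to be cancelled by the $\Theta_{0,t}$-drift. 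Either fix is easy, but as written the cancellation would not close. Second, as you acknowledge yourself, the real content is the time-dependent It\^o formula for sections over an evolving base together with the parabolic Bochner/Weitzenb\"ock identity that produces the $(-\partial_t g + \Ric)^{\#}$ combination; you invoke these from \cite{martingales} but do not establish them, so your text is a correct roadmap to the proof rather than a self-contained argument.
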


The following corollary extends
{\cite[Theorem~5.5]{arnaudonthalmaier}} from the case of a fixed
metric to the case of evolving metrics.

\begin{corollary}
  \label{repform}
  Let $T>0$. Assume that there is a constant $\alpha\in\R$ such that
  \begin{equation*}
    \Ric_{g(t)}-\frac{\partial g}{\partial t} \geq\alpha\quad\text{on $[0,T]\times M$,}
  \end{equation*}
  that the sectional curvatures of $N$ are bounded from above
  and that the differential $du$ of $u$ is uniformly bounded on
  $[0,T]\times M$.  Then, for each $0\leq t\leq T$,
  \begin{equation}\label{repr}
    du(0, x) = \E\left[\tilde\Theta_{0,t}^{-1}\, du(t, X_t)\,\Theta_{0,t}\right].
  \end{equation}
\end{corollary}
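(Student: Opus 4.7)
The plan is to upgrade the local martingale provided by Theorem~\ref{martheat} to a true martingale by showing it is bounded on $[0,T]$ by a deterministic constant. Fix $v\in T_xM$ and set
\[
N_s:=\tilde\Theta_{0,s}^{-1}\,du(s,X_s)\,\Theta_{0,s}\,v\in T_{u(0,x)}N,\qquad s\in[0,T].
\]
By Theorem~\ref{martheat}, $(N_s)_{s\in[0,T]}$ is a local martingale. If I can show $|N_s|$ is uniformly bounded by a deterministic quantity on $[0,T]$, then $N$ is a bounded (hence uniformly integrable) martingale, so $\E[N_t]=N_0=du(0,x)\,v$, and~\eqref{repr} follows since $v$ is arbitrary.

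The middle factor is controlled by hypothesis: $|du(s,X_s)|\le D$. For the source factor, the standard Gronwall estimate for the damped transport in the time-dependent setting, using the defining covariant ODE of $\Theta_{0,s}$ together with $\Ric_{g(s)}-\partial g/\partial s\ge\alpha$, yields
\[
\|\Theta_{0,s}\|_{(T_xM,g(0))\to(T_{X_s}M,g(s))}\le e^{-\alpha s/2},
\]
which is the natural time-dependent analog of the classical estimate under $\Ric\ge\alpha$.

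For the target factor I bound $\tilde\Theta_{0,s}$ from below. Let $\kappa$ be an upper bound for the sectional curvatures of $N$ and $\kappa^+:=\max(\kappa,0)$. Using the symmetry
\[
\langle\tilde R(Y,W)W,Y\rangle=\tilde K(Y,W)\bigl(|Y|^2|W|^2-\langle Y,W\rangle^2\bigr)\le\kappa^+|Y|^2|W|^2
\]
and $\sum_{i=1}^m|du(s,X_s)\,\xi_s^i|^2\le mD^2$, the covariant equation defining $\tilde\Theta_{0,s}$ gives
\[
\frac{d}{ds}|\tilde\Theta_{0,s}w|^2\ge-\kappa^+ mD^2\,|\tilde\Theta_{0,s}w|^2,
\]
hence $|\tilde\Theta_{0,s}w|\ge e^{-\kappa^+ mD^2 s/2}|w|$. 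In particular $\tilde\Theta_{0,s}$ is invertible and $\|\tilde\Theta_{0,s}^{-1}\|\le e^{\kappa^+ mD^2 s/2}$. Multiplying the three bounds yields $|N_s|\le D\,e^{-\alpha s/2}e^{\kappa^+ mD^2 s/2}\,|v|_{g(0)}$, which is uniform in $s\in[0,T]$ and $\omega$.

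The main obstacle I anticipate is the target estimate: unlike on the source, where a Ricci lower bound controls $\|\Theta_{0,s}\|$ by design, the target contribution requires translating an \emph{upper} sectional-curvature bound into a \emph{lower} bound on $|\tilde\Theta_{0,s}w|^2$. The observation that makes this work is the symmetry identity relating the curvature contraction $\sum_i\langle\tilde R(Y,du\,\xi^i)\,du\,\xi^i,Y\rangle$ in the defining equation to sectional curvatures of $N$, together with the uniform bound on $du$ needed to dominate the sum. Everything else is a direct application of the local-martingale-to-martingale passage under a deterministic $L^\infty$ bound.
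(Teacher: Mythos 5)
Your proof is correct and takes essentially the same route as the paper: the paper's proof simply asserts that the local martingale of Theorem~\ref{martheat} is bounded on $[0,T]$ (hence a true martingale) and takes expectations, and your argument supplies the three explicit factor estimates ($|du|\le D$, $\|\Theta_{0,s}\|\le e^{-\alpha s/2}$, $\|\tilde\Theta_{0,s}^{-1}\|\le e^{\kappa^+ mD^2 s/2}$) that justify that boundedness. Your bound on $\tilde\Theta_{0,s}^{-1}$ is also the same computation that reappears in the paper's Lemma~\ref{invdamp}.
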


\begin{proof}
  Under the above assumptions the local martingale of
  Theorem~\ref{martheat} is bounded on the time interval $[0,T]$ and
  hence a true martingale. The claim follows by taking expectations.
\end{proof}

Corollary~\ref{repform} implies a first Liouville theorem under the
assumptions that the metric on $M$ evolves under uniformly strict
backward super Ricci flow and that the curvature of $N$ is
non-positive.

\begin{theorem}[cf.\ {\cite[Proposition~9.6]{martingales}}]
  Let $M$ be connected. Suppose that there is a constant $\alpha > 0$
  such that
  \begin{equation*}
    \Ric_{g(t)}-\frac{\partial g}{\partial t} \geq\alpha
  \end{equation*}
  (uniformly strict backward super Ricci flow), that the sectional
  curvatures of $N$ are non-positive
  and that the differential of $u$ is uniformly bounded.  Then $u$ is
  constant.
\end{theorem}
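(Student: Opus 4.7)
The plan is to feed the representation formula \eqref{repr} into decay estimates for the two damped parallel transports and let $T\to\infty$. Fix $(s,x)\in[0,\infty)\times M$ arbitrary; by shifting time it suffices to treat $s=0$. For each $T>0$ the hypotheses of Corollary~\ref{repform} hold (curvature bound on $M$ from the super Ricci flow, non-positivity hence in particular upper boundedness of the sectional curvatures on $N$, and uniform boundedness of $du$), so
\[
du(0,x)v = \E\!\left[\tilde\Theta_{0,T}^{-1}\,du(T,X_T)\,\Theta_{0,T}v\right]\qquad\text{for every } v\in T_xM.
\]
The goal is to show the integrand tends to $0$ uniformly in $\omega$ as $T\to\infty$.

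The first ingredient is a contraction estimate for $\Theta_{0,t}$. Differentiating $|\Theta_{0,t}v|_{g(t)}^2$ and using the defining covariant ODE of $\Theta$, the $\partial_t g$ contribution coming from the time-dependent metric cancels with the corresponding term in the generator, leaving
\[
\frac{d}{dt}|\Theta_{0,t}v|_{g(t)}^2 = -\bigl\langle\bigl(\Ric_{g(t)}-\tfrac{\partial g}{\partial t}\bigr)^\#\Theta_{0,t}v,\Theta_{0,t}v\bigr\rangle_{g(t)} \leq -\alpha\,|\Theta_{0,t}v|_{g(t)}^2,
\]
so by Gronwall $|\Theta_{0,t}v|_{g(t)}\leq e^{-\alpha t/2}|v|_{g(0)}$. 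The second ingredient concerns $\tilde\Theta_{0,t}$: since $N$ has non-positive sectional curvature, $\langle\tilde R(w,d\X_t)d\X_t,w\rangle\leq 0$, so $|\tilde\Theta_{0,t}w|$ is non-decreasing in $t$, equivalently $\|\tilde\Theta_{0,t}^{-1}\|\leq 1$. Combined with $\|du\|_\infty\leq C$, this yields the pathwise estimate
\[
\bigl|\tilde\Theta_{0,T}^{-1}\,du(T,X_T)\,\Theta_{0,T}v\bigr|\leq C\,e^{-\alpha T/2}\,|v|_{g(0)},
\]
uniformly in $\omega$. Taking expectation and letting $T\to\infty$ gives $du(0,x)v=0$, and since $v$ and $x$ were arbitrary, $du(0,\newdot)\equiv 0$ on $M$; the same reasoning at initial time $s$ shows $du(s,\newdot)\equiv 0$ for every $s\geq 0$. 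Connectedness of $M$ then forces $u(s,\newdot)$ to be constant on $M$ for each $s$, and the harmonic map heat flow equation \eqref{Eq:spacetimeharmonic} (whose right-hand side is a trace of $\nabla du$ and hence vanishes as well) shows that this constant is independent of $s$. Thus $u$ is constant.

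The main obstacle is establishing the two operator-norm estimates cleanly in the time-dependent setting. The estimate for $\Theta_{0,t}$ is the delicate one: the metric $g(t)$ itself is evolving, so one must verify that the damped parallel transport defined in \cite{martingales} has been set up precisely so that the $\partial_t g$ terms cancel when one differentiates $|\Theta_{0,t}v|_{g(t)}^2$, leaving a clean curvature quantity bounded below by $\alpha$. Once this cancellation is in hand, the remaining argument is just Gronwall and the bounded convergence theorem.
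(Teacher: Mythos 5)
Your proof is correct and follows essentially the same route as the paper: apply Corollary~\ref{repform}, bound $\|\Theta_{0,t}\|\leq e^{-\alpha t/2}$ using the uniformly strict backward super Ricci flow and $\|\tilde\Theta_{0,t}^{-1}\|\leq 1$ using non-positive curvature of $N$, then let $t\to\infty$. The paper merely asserts these operator-norm bounds without derivation; the Gronwall-type computations you supply, together with your explicit passage from $du\equiv0$ to constancy of $u$ in both space and time, are exactly the suppressed details.
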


\begin{proof}
  By Corollary~\ref{repform}
  \begin{equation*}
    du(0, x) = \E\left[\tilde\Theta_{0,t}^{-1}\, du(t, X_t)\,
      \Theta_{0,t}\right]
  \end{equation*}
  for every $t\geq 0$. The curvature conditions imply that
  $\|\Theta_{0,t}\|\leq e^{-\alpha t/2}$ and $\|\tilde
  \Theta_{0,t}^{-1}\|\leq 1$, so that
  \begin{equation*}
    |du(0,x)|\leq e^{-\alpha t/2}\sup_{y\in M} |du(t, y)|.
  \end{equation*}
  The claim now follows from letting $t\to\infty$.
\end{proof}

To prove Liouville theorems under the weaker assumption that the
metric on $M$ evolves under backward super Ricci flow (not necessarily
uniformly strict backward super Ricci flow) one needs more refined
representation formulae which rely on integration by parts arguments.

For $X$ and $\X$ as above, let $B = (B_t)_{t\geq 0}$ be the
Riemann-anti-development of $X$ into $T_xM$ (hence a $T_xM$-valued
Brownian motion, see \cite[Remark~8.4]{martingales}), and
\begin{align}
  \label{Eq:AntiDef}
  \A_{\defo}(\X)_t := \int_0^t\tilde\Theta_{0,s}^{-1}\circ d\X_s =
  \int_0^t\tilde\Theta_{0,s}^{-1}\, du(s,X_s)\,\parr_{0,s}^{\Riem}\,
  dB_s
\end{align}
the deformed anti-development of $\X$
(cf.~\cite[Eq.~(5.32)]{arnaudonthalmaier}).

\begin{theorem}[cf.\ {\cite[Theorem~5.6]{arnaudonthalmaier}} for the case of a fixed metric]\label{localmart}
  Let $\ell = (\ell(t))_{t\geq 0}$ be a $T_xM$-valued process with
  absolutely continuous trajectories. The following
  $T_{u(0,x)}N$-valued processes are local martingales:
  \begin{align*}
    N_t & :=  \tilde\Theta_{0,t}^{-1}\, du(t, X_t)\,\Theta_{0,t}\,\ell(t) - \int_0^t\tilde\Theta_{0,s}^{-1}\,du(s, X_s)\,\Theta_{0,s}\,\dot\ell(s)\,ds,\\
    M_t & := \tilde\Theta_{0,t}^{-1}\, du(t,
    X_t)\,\Theta_{0,t}\,\ell(t) -
    \int_0^t\left(\parr_{0,s}^{\Riem}\right)^{-1}\,\Theta_{0,s}\,\dot\ell(s)\cdot
    dB_s\; \A_{\defo}(\X)_t .
  \end{align*}
\end{theorem}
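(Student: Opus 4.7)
The strategy is to treat the process $Z_t := \tilde\Theta_{0,t}^{-1}\, du(t, X_t)\,\Theta_{0,t}$, viewed as taking values in the finite-dimensional vector space of linear maps $T_xM \to T_{u(0,x)}N$, as a local martingale; this follows from Theorem~\ref{martheat} applied coordinatewise to any basis of $T_xM$. Both claims then reduce to Itô calculus, exploiting the fact that $\ell$ has absolutely continuous trajectories and is therefore of finite variation with vanishing quadratic covariation with any continuous semimartingale.

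For the first process, I would apply the product rule to $Z_t\,\ell(t)$. Since $[Z,\ell]\equiv 0$, this gives
\[ d\bigl(Z_t\,\ell(t)\bigr) = (dZ_t)\,\ell(t) + Z_t\,\dot\ell(t)\, dt. \]
Integrating and subtracting $\int_0^t Z_s\,\dot\ell(s)\,ds$ shows that $N_t = Z_0\,\ell(0) + \int_0^t (dZ_s)\,\ell(s)$, which is a local martingale because $Z$ is.

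For the second process, the idea is to rewrite the drift $\int_0^t Z_s\,\dot\ell(s)\,ds$ as the finite-variation part of an Itô product. I would introduce the scalar local martingale
\[ Y_t := \int_0^t \bigl\langle \bigl(\parr_{0,s}^{\Riem}\bigr)^{-1}\Theta_{0,s}\,\dot\ell(s),\, dB_s\bigr\rangle \]
and compute its covariation with the $T_{u(0,x)}N$-valued local martingale $\A_{\defo}(\X)$. Writing $dB_s$ in the fixed orthonormal basis $(e_1,\ldots,e_m)$ of $T_xM$ and using the definition~\eqref{Eq:AntiDef}, the cross-bracket identity $\sum_i \langle v, e_i\rangle e_i = v$ collapses the sum and yields
\[ d[Y, \A_{\defo}(\X)]_s = \tilde\Theta_{0,s}^{-1}\,du(s,X_s)\,\parr_{0,s}^{\Riem}\bigl(\parr_{0,s}^{\Riem}\bigr)^{-1}\Theta_{0,s}\,\dot\ell(s)\,ds = Z_s\,\dot\ell(s)\,ds. \]
Applying Itô's product rule to $Y_t\, \A_{\defo}(\X)_t$ (which starts at $0$) gives
\[ Y_t\, \A_{\defo}(\X)_t - \int_0^t Y_s\, d\A_{\defo}(\X)_s - \int_0^t \A_{\defo}(\X)_s\, dY_s = \int_0^t Z_s\,\dot\ell(s)\,ds. \]
Substituting into the definition of $M_t$ reveals that $M_t$ differs from $N_t$ by stochastic integrals against local martingales, and is therefore itself a local martingale.

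The main technical obstacle is the covariation computation: one must carefully track the composition of the three transport maps $\tilde\Theta$, $\Theta$ and $\parr^{\Riem}$, verifying the cancellation $\parr_{0,s}^{\Riem}\bigl(\parr_{0,s}^{\Riem}\bigr)^{-1} = \Id$ after the basis expansion. Beyond that, the argument is standard continuous semimartingale calculus, and the absolute continuity assumption on $\ell$ is exactly what is needed to ensure that no further covariation terms intrude in the product rules.
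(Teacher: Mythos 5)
Your proposal is correct and follows essentially the same route as the paper: for $N$, apply the product rule to $Z_t\ell(t)$ using that $\ell$ has finite variation, and for $M$, compute the quadratic covariation $\bigl[\int_0^\cdot(\parr_{0,s}^{\Riem})^{-1}\Theta_{0,s}\dot\ell(s)\cdot dB_s,\ \A_{\defo}(\X)\bigr]_t=\int_0^t\tilde\Theta_{0,s}^{-1}du(s,X_s)\Theta_{0,s}\dot\ell(s)\,ds$ and invoke It\^o's product rule. You simply spell out the It\^o product step for $M$ more explicitly than the paper does, but the key lemma (Theorem~\ref{martheat}) and the decomposition are identical.
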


\begin{proof}
  We have
  \begin{equation*}
    d \left( \tilde\Theta_{0,t}^{-1} \, du(t, X_t) \, \Theta_{0,t} \, \ell(t) \right) = \tilde\Theta_{0,t}^{-1}\, du(t, X_t) \,
    \Theta_{0,t} \, \dot \ell(t) \, dt + d \left( \tilde\Theta_{0,t}^{-1} \, du(t, X_t) \, \Theta_{0,t} \right) \! \ell(t),
  \end{equation*}
  so that $N$ is a local martingale by Theorem~\ref{martheat}. Since
  the quadratic covariation of $\A_{\defo}(\X)_t$ and $\int_0^t
  \left(\parr_{0,s}^{\Riem}\right)^{-1}\Theta_{0,s}\dot\ell(s) \cdot
  dB_s$ equals
  \begin{equation*}
    \Bigl\langle\A_{\defo}(\X),\int_0^\bull \left(\parr_{0,s}^{\Riem}\right)^{-1}\Theta_{0,s}\,\dot\ell(s) \cdot dB_s\Bigr\rangle_t = \int_0^t\tilde\Theta_{0,s}^{-1}\, du(s,X_s)\,\Theta_{0,s}\,\dot\ell(s)\,ds,
  \end{equation*}
  it follows that $M$ is a local martingale as well.
\end{proof}

In general, the processes $N$ and $M$ of Theorem~\ref{localmart} are
only local martingales, not necessarily true martingales. To obtain
stochastic representation formulas by taking expectations, a possible
strategy is to stop these processes before the underlying Brownian
motion $X$ leaves a relatively compact domain. The $T_xM$-valued
process $\ell$ may then be chosen appropriately.

\begin{theorem}[cf.\ {\cite[Theorem~3.1, Remark~3.4 and Theorem~4.1]{thalmaierwang}}]
  Let $v\in T_xM$, $R > 0$,
  \begin{equation*}
    D_R := \{ (t,y)\in\R_+ \times M\mid d_{g(t)}(x,y) < R\}
  \end{equation*}
  and $\tau$ a bounded stopping time satisfying $\tau\leq\tau_R$,
  where
  \begin{align*}
    \tau_R  := \inf &\{ t\geq 0 \mid d_{g(t)}(x, X_t) \geq R \}\\
    = \inf &\{ t\geq 0 \mid (t, X_t) \notin D_R \}.
  \end{align*}
  Suppose that the process $\ell$ satisfies $\ell(0) = v$, $\ell(\tau)
  = 0$ and
  \begin{equation}\label{Eq:l-integrability}
    \E \left[ \left( \int_0^{\tau} |\dot \ell(s)|^2 ds \right)^{(1+\eps)/{2}} \right] < \infty
  \end{equation}
  for some $\eps > 0$. Then the following stochastic representation
  formulas hold:
  \begin{equation}\label{repformula}
    du(0, x)v = - \E \left[ \int_0^{\tau} \left(\parr_{0,s}^{\Riem}\right)^{-1}\Theta_{0,s} \,\dot\ell(s) \cdot dB_s \; \A_{\defo}(\X)_\tau\right]
  \end{equation}
  and
  \begin{equation*}
    du(0, x)v = - \E \left[ \int_0^{\tau}\tilde\Theta_{0,s}^{-1}\,du(s, X_s)\,\Theta_{0,s}\,\dot\ell(s)\,ds\right].
  \end{equation*}
\end{theorem}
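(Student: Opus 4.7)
The plan is to apply Theorem~\ref{localmart} to $N_t$ and $M_t$, stop them at the bounded stopping time $\tau$, use $\Theta_{0,0}=\Id$, $\tilde\Theta_{0,0}=\Id$, $\ell(0)=v$ and $\ell(\tau)=0$ to identify the endpoint values, and finally show that $N^{\tau}$ and $M^{\tau}$ are uniformly integrable, so that $\E[N_0]=\E[N_\tau]$ and $\E[M_0]=\E[M_\tau]$ yield the two claimed representation formulas.

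Identifying the endpoints is immediate: $N_0=M_0=du(0,x)v$, while $\ell(\tau)=0$ annihilates the leading terms in $N_\tau$ and $M_\tau$, leaving
\begin{align*}
  N_\tau &= -\int_0^\tau\tilde\Theta_{0,s}^{-1}\,du(s, X_s)\,\Theta_{0,s}\,\dot\ell(s)\,ds,\\
  M_\tau &= -\Bigl(\int_0^\tau\bigl(\parr_{0,s}^{\Riem}\bigr)^{-1}\Theta_{0,s}\,\dot\ell(s)\cdot dB_s\Bigr)\,\A_{\defo}(\X)_\tau.
\end{align*}
Consequently the two formulas follow once uniform integrability of the stopped processes is established.

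For the analytic bounds: $\tau$ is bounded by some deterministic $T$, and $\tau\leq\tau_R$ forces $(s,X_s)_{0\leq s\leq\tau}$ to stay inside the relatively compact set $D_R\cap([0,T]\times M)$. Smoothness of $u$ together with the covariant ODEs defining $\Theta_{0,s}$, $\tilde\Theta_{0,s}$ and the Riemann-parallel transport then produce a single constant $K$ bounding $|du|$, $\|\Theta_{0,s}\|$, $\|\tilde\Theta_{0,s}^{-1}\|$, $\|\parr_{0,s}^{\Riem}\|$ and $\|(\parr_{0,s}^{\Riem})^{-1}\|$ on $[0,\tau]$. For $N$ the pointwise estimate
\[
  \sup_{t\leq\tau}|N_t|\leq K^3\left(|v|+2T^{1/2}\Bigl(\int_0^\tau|\dot\ell(s)|^2\,ds\Bigr)^{1/2}\right)
\]
combined with \eqref{Eq:l-integrability} (Jensen yields $\E[Y^{1/2}]\leq\E[Y^{(1+\eps)/2}]^{1/(1+\eps)}$ for $Y=\int_0^\tau|\dot\ell|^2\,ds$) delivers $\E[\sup_{t\leq\tau}|N_t|]<\infty$, so $N^{\tau}$ is uniformly integrable.

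The delicate case, and the main obstacle, is $M$, where the product of two stochastic integrals must be controlled. Writing $I_t:=\int_0^t(\parr_{0,s}^{\Riem})^{-1}\Theta_{0,s}\,\dot\ell(s)\cdot dB_s$, the Burkholder--Davis--Gundy inequality gives
\[
  \E\bigl[\sup_{t\leq\tau}|I_t|^{1+\eps}\bigr]\leq C\,\E\Bigl[\Bigl(\int_0^\tau|\dot\ell(s)|^2\,ds\Bigr)^{(1+\eps)/2}\Bigr]<\infty
\]
by \eqref{Eq:l-integrability}, while the bounded integrand defining $\A_{\defo}(\X)$ makes $\sup_{t\leq\tau}|\A_{\defo}(\X)_t|$ lie in every $L^p$. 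Hölder's inequality with conjugate exponents $1+\eps$ and $(1+\eps)/\eps$ then produces $\E[\sup_{t\leq\tau}|I_t\,\A_{\defo}(\X)_t|]<\infty$; this is precisely why the strict inequality $\eps>0$ is demanded in \eqref{Eq:l-integrability} rather than merely $L^1$-integrability of the quadratic variation. Uniform integrability of $M^{\tau}$ follows, and taking expectations in the expressions for $N_\tau$ and $M_\tau$ completes the argument.
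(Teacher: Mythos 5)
Your proof is correct and follows the same route as the paper: stop the local martingales $N$ and $M$ of Theorem~\ref{localmart} at $\tau$, use $\ell(0)=v$ and $\ell(\tau)=0$ together with $\Theta_{0,0}=\tilde\Theta_{0,0}=\Id$ to identify $N_0=M_0=du(0,x)v$ and the stopped values $N_\tau$, $M_\tau$, and then upgrade local to true martingale on $[0,T]$ via Burkholder--Davis--Gundy and the integrability hypothesis~\eqref{Eq:l-integrability} so that optional stopping applies. In fact your treatment of the product $I_t\,\A_{\defo}(\X)_t$ in $M$ is more careful than what is literally written in the paper, which asserts that $\A_{\defo}(\X)_t$ is bounded for $t\leq\tau_R\wedge c$; that process is a stochastic integral with bounded integrand and therefore lies in every $L^p$, but it is not almost surely bounded, and your H\"older step with conjugate exponents $1+\eps$ and $(1+\eps)/\eps$ closes this point cleanly and also makes explicit why the hypothesis requires $\eps>0$ rather than mere square-integrability of $\dot\ell$.
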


\begin{proof}
  To prove the claimed representation formulas it is sufficient to
  show that the stopped processes $(M_{t \wedge \tau})_{t \geq 0}$ and
  $(N_{t \wedge \tau})_{t \geq 0}$ are true martingales. By
  Theorem~\ref{localmart} we already know that they are local
  martingales.  To show that the process $(M_{t \wedge \tau})_{t
    \geq0}$ is a true martingale it suffices to show that for each $c
  \geq 0$ the family $\{M_{\sigma \wedge \tau} \mid \sigma \mbox{
    stopping time, } \sigma \leq c\}$ is uniformly integrable, which
  holds if there is a constant $C < \infty$ such that for every
  stopping time $\sigma \leq c$
  \begin{equation}\label{uniin}
    \E \left[ \left| M_{\sigma \wedge \tau} \right|^{1+\eps} \right] \leq C.
  \end{equation}
  We first observe that the terms $\tilde \Theta_{0,t}^{-1}\, du(t,
  X_t)\, \Theta_{0,t} \, \dot\ell(t)$ and $\A_{\defo}(\X)_t$ are
  bounded as long as $t \leq {\tau_R \wedge c}$. Moreover, using the
  Burkholder-Davis-Gundy inequality (see e.g.\
  \cite[Theorem~3.3.28]{karatzas}) and the fact that $\Theta_{0,s}$ is
  bounded for $s \leq \tau_R \wedge c$, we obtain
  \begin{equation*}
    \E \left[ \left| \int_0^{\sigma \wedge \tau} \left(\parr_{0,s}^{\Riem}\right)^{-1} \Theta_{0,s} \dot \ell(s) \cdot dB_s \right|^{1+\eps} \right] \leq \tilde C \,\E \left[ \left( \int_0^{\tau} |\dot \ell(s)|^2 ds \right)^{(1+\eps)/{2}} \right]
  \end{equation*}
  and hence \eqref{uniin}, so that the process $(M_{t \wedge \tau})_{t
    \geq 0}$ is indeed a martingale.

  In a similar way the process $(N_{t \wedge \tau})_{t \geq 0}$ is
  shown to be a martingale as well.
\end{proof}

\section{A priori estimates}
In this section we prove differential estimates for space-time
harmonic maps $u\colon\R_+\times M\to N$ under the assumption that the
metric on $M$ evolves under backward super Ricci flow
\begin{equation*}
  \frac{\partial g}{\partial t}  \leq \Ric_{g(t)}.
\end{equation*} 
By time reversal the results apply to ancient solutions to the
harmonic map heat flow under forward super Ricci flow. These estimates
will then be used in the next section to derive Liouville type results
for space-time harmonic mappings, respectively ancient solutions to
the harmonic map heat flow. The starting point of our approach is the
estimate
\begin{equation}\label{estimate}
  |du(0, x)v| \leq \E \left[ \left| \int_0^{t \wedge \tau_R} \left(\parr_{0,s}^{\Riem}\right)^{-1} \Theta_{0,s} \,\dot\ell(s) \cdot dB_s \right|^p \right]^{1/p} \E \left[ |\A_{\defo}(\X)_{t \wedge \tau_R}|^q \right]^{1/q}
\end{equation}
for $p,q > 1$ such that $1/p + 1/q = 1$, which follows immediately
from formula~\eqref{repformula}. The process~$\ell$ satisfies $\ell(0)
= v$, $\ell(\tau)= 0$ and condition~\eqref{Eq:l-integrability};
otherwise it may be chosen arbitrarily.

Note that in estimate \eqref{estimate} geometric information of the
evolving manifold $M$ only enters through the first term on the
right-hand side, while the second term captures the geometry of the
target~$N$. In this sense, estimate \eqref{estimate} allows to
separate the contributions of the curvatures of $M$ and $N$ to the
differential of $u$.

\subsection{Estimation of the first factor}\label{estfirstf}
To estimate the first factor on the right-hand side of
\eqref{estimate}, we have to choose the process $\ell$ in a suitable
way. To this end we fix $R > 0$ and, similarly to \cite[Proof of
Corollary~5.1]{thalmaierwang98}, define $f: \bar D_R \to [0,1]$ by
\begin{equation*}
  f(u,y) := \cos \left( \frac{\pi}{2R} \, d_{g(u)}(x,y) \right)\!.
\end{equation*}
For $p \geq 1$ let
\begin{equation*}
  c_p(R) := \sup_{(u,y) \in \tilde D_R} \left\{ f^{p+2} \left( \frac{\partial f^{-p}}{\partial u} + \frac{1}{2} \Delta_{g(u)}(f^{-p}) \right) \right\},
\end{equation*}
where $\tilde D_R := \{(t,y) \in D_R: y \neq x \mbox{ and } y \notin
\Cut_{g(t)}(x)\}$.

\begin{lemma}\label{cprlemma}
  Suppose that
  \begin{equation*}
    \frac{\partial g}{\partial t} \leq \Ric_{g(t)}
  \end{equation*}
  and that there exists $r_0 > 0$ such that
  \begin{equation}\label{cxr0}
    C(x, r_0) := \sup \left\{ |\Ric(t,y)|:\  t \geq 0, \, d_{g(t)}(x,y) \leq r_0 \right\}
  \end{equation}
  is finite. 
  Then $c_p(R)$ is finite for each $R > 0$, and moreover
  \begin{equation}\label{cpr}
    c_p(R) = O(1/R),\quad\text{as $R \to \infty$.}
  \end{equation}
\end{lemma}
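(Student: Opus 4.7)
The plan is to reduce the assertion to an upper bound on $\partial_u r + \tfrac{1}{2}\Delta_{g(u)} r$, where $r(u,y) := d_{g(u)}(x,y)$, via a direct computation, and then to establish such a bound by a Perelman-style cut-off argument applied to the standard index form. Writing $a := \pi/(2R)$ so that $f = \cos(ar)$ and using $|\nabla r|_{g(u)} = 1$, a direct expansion yields
\begin{align*}
  f^{p+2}\left(\frac{\partial f^{-p}}{\partial u} + \tfrac{1}{2}\Delta_{g(u)} f^{-p}\right)
  &= p\,a\,f\sin(ar)\bigl(\partial_u r + \tfrac{1}{2}\Delta_{g(u)} r\bigr)\\
  &\quad + \tfrac{1}{2}p(p+1)\,a^2\sin^2(ar) + \tfrac{1}{2}p\,a^2 f^2.
\end{align*}
The last two terms are uniformly $O(a^2) = O(1/R^2)$ on $\tilde D_R$, so it remains to bound $\partial_u r + \tfrac{1}{2}\Delta_{g(u)} r$ from above by a constant depending only on $x$ and $r_0$.

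For $(u,y) \in \tilde D_R$ with $y \neq x$, let $\gamma\colon [0,r] \to M$ be the unique minimizing unit-speed $g(u)$-geodesic from $x$ to $y$, with tangent $T := \dot\gamma$. The first variation of arc length together with the hypothesis $\partial g/\partial u \leq \Ric_{g(u)}$ gives $\partial_u r \leq \tfrac{1}{2}\int_0^r \Ric_{g(u)}(T,T)\,ds$. For $\Delta_{g(u)} r$ I would test the index form with variation fields $V_i := \phi(s)\,E_i(s)$, where $E_1, \ldots, E_{m-1}$ is a parallel orthonormal frame along $\gamma$ perpendicular to $T$ and $\phi \colon [0,r] \to [0,1]$ is Lipschitz with $\phi(0) = 0$ and $\phi(r) = 1$. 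Summing the resulting $m-1$ Hessian estimates yields
\begin{equation*}
  \Delta_{g(u)} r \leq (m-1)\int_0^r \phi'(s)^2\,ds - \int_0^r \phi(s)^2\,\Ric_{g(u)}(T,T)\,ds.
\end{equation*}

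The crucial choice is $\phi(s) = \min(s/r_0,\,1)$ when $r \geq r_0$; the integrals of $\Ric_{g(u)}(T,T)$ over $[r_0, r]$ in the two estimates then cancel, leaving
\begin{equation*}
  \partial_u r + \tfrac{1}{2}\Delta_{g(u)} r \leq \tfrac{1}{2}\int_0^{r_0}\bigl(1 - (s/r_0)^2\bigr)\Ric_{g(u)}(T,T)\,ds + \frac{m-1}{2r_0}.
\end{equation*}
Since $d_{g(u)}(x,\gamma(s)) = s \leq r_0$ for $s \in [0, r_0]$, assumption~\eqref{cxr0} bounds the integrand in absolute value by $C(x, r_0)$, so the right-hand side is at most $K := \tfrac{r_0}{3}\,C(x, r_0) + \tfrac{m-1}{2r_0}$, a constant independent of $R$, $u$, and $y$. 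For the remaining case $r < r_0$ the linear choice $\phi(s) = s/r$ gives $\partial_u r + \tfrac{1}{2}\Delta_{g(u)} r \leq \tfrac{C(x, r_0)\,r}{3} + \tfrac{m-1}{2r}$, and the elementary bound $\sin(ar)/r \leq a$ absorbs the singularity at $r = 0$. In both cases the dominant term of the opening identity is at most $p\,a\,K + O(a^2)$; taking the supremum over $\tilde D_R$ then proves both $c_p(R) < \infty$ for each $R > 0$ and $c_p(R) = O(1/R)$ as $R \to \infty$.

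The main obstacle is the second-variation step: since $\Ric$ is not assumed bounded from below outside a neighborhood of $x$, the classical Laplacian comparison theorem does not apply. The Perelman-style cut-off $\phi(s) = \min(s/r_0, 1)$ is the key device, confining the uncontrolled part of the Ricci integral to the region where~\eqref{cxr0} provides uniform control, at the price of the harmless additive term $(m-1)/(2r_0)$ arising from $(m-1)\int_0^{r_0}\phi'(s)^2\,ds = (m-1)/r_0$. The remaining technical points (smoothness of $r$ on $\tilde D_R$ and validity of the first variation formula for time-dependent metrics) are standard.
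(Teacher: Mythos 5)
Your proof is correct, and it reaches the same bottleneck as the paper — a uniform upper bound on the parabolic drift $\partial_u \rho + \tfrac12\Delta_{g(u)}\rho$ of the radial function on $\tilde D_R$ — but it gets there by a different route. The paper simply quotes an existing parabolic Laplacian comparison theorem (Proposition~2 of Kuwada--Philipowski, which is exactly estimate~\eqref{driftest} above), yielding $\partial_u\rho + \tfrac12\Delta_{g(u)}\rho \leq \tfrac{d-1}{2}\bigl(k\coth(k(\rho\wedge r_0)) + k^2(\rho\wedge r_0)\bigr)$ with $k = \sqrt{C(x,r_0)/(d-1)}$, and then finishes by elementary bounds on $\coth$. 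You instead re-derive a comparable bound from scratch: first variation plus the backward super Ricci flow hypothesis gives $\partial_u\rho \leq \tfrac12\int_0^\rho\Ric_{g(u)}(T,T)\,ds$, while the second-variation/index-form estimate with the Perelman cut-off $\phi(s) = \min(s/r_0,1)$ cancels the uncontrolled Ricci integral over $[r_0,\rho]$ against the corresponding part of the $\partial_u\rho$ contribution. This trades the cited lemma for a self-contained two-page argument and produces the clean constant $K = \tfrac{r_0}{3}C(x,r_0) + \tfrac{m-1}{2r_0}$ in place of the $\coth$ expression; both are uniform in $\rho$ for $\rho\geq r_0$ and have the same $(m-1)/(2\rho)$-type singularity near $\rho=0$, which you and the paper absorb in the same way using $\sin(a\rho)\leq a\rho$. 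Your opening expansion of $f^{p+2}(\partial_u f^{-p} + \tfrac12\Delta_{g(u)} f^{-p})$ agrees with the paper's; the envelope-theorem step for $\partial_u\rho$ and the index-form step are both standard and are applied away from $x$ and the cut locus, precisely where $\tilde D_R$ lives, so there is no gap.
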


\begin{remark}\label{remest}
  In the case of a fixed metric $g$ with non-negative Ricci curvature,
  instead of \eqref{cpr} one obtains the much better estimate
  \[
  c_p(R) \leq \frac{\pi^2 (d+3)}{4 R^2},
  \]
  see \cite[Proof of Corollary~5.1]{thalmaierwang98}. This is due to
  the fact that in the case of a fixed metric with non-negative Ricci
  curvature the estimate for the drift of the radial part of Brownian
  motion is much better than in the case of backward super Ricci flow,
  see Remark~\ref{driftbr}
  below. 
\end{remark}

\begin{proof}[Proof of Lemma~\rm\ref{cprlemma}]
  We first observe that
  \begin{equation*}
    c_p(R) = \sup_{(u,y) \in \tilde D_R} \left\{ \frac{p (p+1)}{2} \,|\nabla f|^2 - p f \left( \frac{\partial f}{\partial u} + \frac{1}{2} \Delta_{g(u)} f \right) \right\}.
  \end{equation*}
  Let now $\rho(u,y) := d_{g(u)}(x,y)$ and $\bar f(\xi) := \cos \big(
  \frac{\pi \xi}{2R} \big)$, so that $f(u,y) = \bar f(\rho(u,y))$ and
  consequently
  \begin{equation*}
    c_p(R) = \sup_{(u,y) \in \tilde D_R} \left\{ \frac{p (p+1)}{2} \,\bar f'(\rho)^2 \,|\nabla \rho|^2 - p \bar f(\rho) \left( \frac{1}{2} \bar f''(\rho) \,|\nabla \rho|^2 + \bar f'(\rho) \left( \frac{\partial \rho}{\partial u} + \frac{1}{2} \Delta_{g(u)} \rho \right) \right) \right\}.
  \end{equation*}
  Since $|\bar f| \leq 1$, $|\bar f'| \leq \pi/(2R)$, $|\bar f''| \leq
  \pi^2/(4R^2)$ and $|\nabla \rho| \equiv 1$ (on $\tilde D_R$), it
  follows that
  \begin{equation*}
    c_p(R) \leq \frac{(p^2 + 2p) \pi^2}{8 R^2} + \frac{p \pi}{2 R} \,\sup_{(u,y) \in \tilde D_R} \left\{ \sin \left( \frac{\pi \rho}{2 R} \right) \left( \frac{\partial \rho}{\partial u} + \frac{1}{2} \Delta_{g(u)} \rho \right) \right\}.
  \end{equation*}
  By \cite[Proposition~2]{kuwadaphilipowski} we have
  \begin{equation}\label{driftest}
    \frac{\partial \rho}{\partial u} + \frac{1}{2} \Delta_{g(u)} \rho \leq \frac{d-1}{2} \left( k(r_0) \coth(k(r_0) (\rho(u,y) \wedge
      r_0)) + k(r_0)^2 (\rho(u,y) \wedge r_0) \right)\!,
  \end{equation}
  where $k(r_0) := \sqrt{\frac{C(x, r_0)}{d-1}}$.  Therefore, using
  the inequality $\coth \xi \leq 1 + {1}/{\xi}$ valid for $\xi > 0$,
  we obtain for $\rho(u,y) \leq r_0$,
\begin{align*}
\frac{p \pi}{2 R} \sin \left( \frac{\pi \rho}{2 R} \right) \left( \frac{\partial \rho}{\partial u} + \frac{1}{2} \Delta_{g(u)} \rho \right) & \leq \frac{p \pi}{2 R} \sin \left( \frac{\pi \rho}{2 R} \right) \frac{d-1}{2} \left( k(r_0) \coth(k(r_0) \rho) + k(r_0)^2 \rho \right)\\
& \leq  \frac{(d-1) p \pi^2}{8R^2} \left( k(r_0) \rho + 1 + k(r_0)^2 \rho^2 \right)\\
& \leq \frac{(d-1)p\pi^2}{8R^2} \left( k(r_0) r_0 + 1 + k(r_0)^2 r_0^2 \right)\!,
\end{align*}
and for $\rho(u,y) \geq r_0$,
\begin{align*}
\frac{p \pi}{2 R} \sin \left( \frac{\pi \rho}{2 R} \right) \left( \frac{\partial \rho}{\partial u} + \frac{1}{2} \Delta_{g(u)} \rho \right) & \leq \frac{p \pi}{2 R} \sin \left( \frac{\pi \rho}{2 R} \right) \frac{d-1}{2} \left( k(r_0) \coth(k(r_0) r_0) + k(r_0)^2 r_0 \right)\\
& \leq \frac{(d-1)p\pi}{4 R} \left( k(r_0) + \frac{1}{r_0} + k(r_0)^2 r_0 \right)\!,
  \end{align*}
  which completes the proof.
\end{proof}

\begin{remark}\label{driftbr}
  The key ingredient of the proof above is estimate \eqref{driftest}
  for the radial drift of Brownian motion, which should be seen as a
  parabolic version of the Laplacian comparison theorem for evolving
  manifolds.  In the case of a fixed metric with non-negative Ricci
  curvature the Laplacian comparison theorem however provides the much
  better estimate
  \begin{equation}\label{lct}
    \frac{1}{2} \Delta \rho \leq \frac{d-1}{2 \rho}.
  \end{equation}
  Since in many respects manifolds evolving under backward super Ricci
  flow behave in a similar way as manifolds with a fixed metric of
  non-negative Ricci curvature (see e.g.\ \cite{mccanntopping} or
  \cite[Section~6.5]{toppinglectures}), one might expect that an
  estimate similar to \eqref{lct} also holds under backward super
  Ricci flow. This, however, is not the case, as the following example
  shows.
\end{remark}

\begin{example}[Brownian motion on Hamilton's cigar]
  Let $M = \R^2$ be equipped with the time-dependent metric
  \begin{equation*}
    g(t,x) := \frac{1}{e^{-2t} + |x|^2} \, g_{\,\text{eucl}}(x),
  \end{equation*}
  where $g_{\,\text{eucl}}$ denotes the standard metric on $\R^2$. As
  shown in \cite[Section~4.3]{chowluni}, the family $(g(t))_{t \in
    \R}$ is an eternal solution of the backward Ricci flow, called
  ``Hamilton's cigar'' or ``Witten's black hole''. By elementary
  calculations one obtains
  \begin{equation*}
    \rho(t,x)  = \arcsinh(e^t |x|),
  \end{equation*}
  and consequently
  \begin{equation*}
    \frac{\partial \rho}{\partial t}(t,x) = \frac{1}{\sqrt{1 + e^{-2t} |x|^{-2}}}
  \end{equation*}
  and
  \begin{equation*}
    \Delta_{g(t)} \rho(t,x) = \frac{1}{e^{2t} |x|^2 \sqrt{1 + e^{-2t} |x|^{-2}}}.
  \end{equation*}
  It is now easy to see that for each $t \in \R$ the function $|x|
  \mapsto \left( \frac{\partial \rho}{\partial t} + \frac{1}{2}
    \Delta_{g(t)} \rho \right) (t,x)$ is decreasing, and hence bounded
  from below by
  \begin{equation*}
    \lim_{|x| \to \infty} \left( \frac{\partial \rho}{\partial t} 
    + \frac{1}{2} \Delta_{g(t)} \rho \right) (t,x) = 1.
  \end{equation*}
Consequently, a parabolic analogue 
to \eqref{lct} cannot hold under backward super Ricci flow. The drift part of the distance process $\rho(t,X_t)$ of Brownian motion grows at least like $t$.
\end{example}

We now consider the strictly increasing process $(T(s))_{s \in [0,
  \tau_R)}$ given by
\[
T(s) := \int_0^s \frac1{f^2(u, X_u)} \, du,
\]
and let the process $(\sigma(r))_{r \geq 0}$ be defined by
\[
\sigma(r) :=
\begin{cases}
  \inf \left\{ s \in [0, \tau_R)\colon\, T(s) \geq r \right\} & \mbox{if such an $s$ exists,}\\
  \tau_R & \mbox{otherwise.}
\end{cases}
\]

\begin{lemma}
  For all stopping times $\tau \leq \tau_R$ we have
  \[
  \E \left[ f^{-p}(\tau, X_\tau) \right] \leq \E \left[ e^{c_p(R)
      T(\tau)} \right].
  \]
\end{lemma}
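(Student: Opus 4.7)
The plan is to apply It\^o's formula to $Y_t := f^{-p}(t, X_t)$ and exploit the defining property of $c_p(R)$. Since $X$ is a $g(t)$-Brownian motion, the It\^o decomposition reads $dY_t = \bigl[(\partial_t + \tfrac{1}{2}\Delta_{g(t)})f^{-p}\bigr](t,X_t)\,dt + dM_t$ for some continuous local martingale $M$ with $M_0 = 0$; by the very definition of $c_p(R)$ the drift is bounded above by $c_p(R)\,f^{-(p+2)}(t,X_t)$, and since $dT(t) = f^{-2}(t,X_t)\,dt$ this rearranges to the key differential inequality
\[
dY_t \le c_p(R)\, Y_t\, dT(t) + dM_t,\qquad Y_0 = 1.
\]

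To integrate this, I would perform the random time change $r = T(t)$ with inverse $\sigma$ and work in the filtration $\mathcal{G}_r := \mathcal{F}_{\sigma(r)}$. In these variables the rescaled process $\tilde Y_r := Y_{\sigma(r)}$ is a $\mathcal{G}$-semimartingale satisfying the autonomous inequality $d\tilde Y_r \le c_p(R)\,\tilde Y_r\, dr + d\tilde M_r$, so applying the integrating factor $e^{-c_p(R) r}$ turns $\tilde Y_r\, e^{-c_p(R) r}$ into a $\mathcal{G}$-local supermartingale starting at $1$. The constraint $\tau \le \tau_R$ forces $X$ to stay inside $D_R$, where $f$ is bounded below; consequently $\tilde Y$ remains bounded on the relevant interval, so a standard localization upgrades the process to a true supermartingale. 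Since $T(\tau)$ is a bounded $\mathcal{G}$-stopping time (as $\{T(\tau) \le r\} = \{\tau \le \sigma(r)\} \in \mathcal{F}_{\sigma(r)}$), the optional stopping theorem applies.

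The main obstacle, I expect, is the last step: the integrating-factor argument directly produces only the weighted estimate $\E[Y_\tau\, e^{-c_p(R) T(\tau)}] \le 1$, and turning this into the additive form $\E[Y_\tau] \le \E[e^{c_p(R) T(\tau)}]$ stated in the lemma requires more. The cleanest route is to write the Doob--Meyer decomposition $\tilde Y_r\, e^{-c_p(R) r} = 1 + \tilde N_r - \tilde A_r$ with $\tilde N$ a local martingale and $\tilde A$ an increasing process (both starting at $0$), so that $\tilde Y_r = e^{c_p(R) r} + e^{c_p(R) r}(\tilde N_r - \tilde A_r)$; since $\tilde Y \ge 1$ forces $\tilde N$ to be bounded below by $-1$ (hence to be a true supermartingale), and $\tilde A \ge 0$, taking expectations at the bounded $\mathcal{G}$-stopping time $T(\tau)$ should then yield $\E[Y_\tau] = \E[\tilde Y_{T(\tau)}] \le \E[e^{c_p(R) T(\tau)}]$. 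Verifying the integrability needed to legitimize this optional stopping argument at the random time $T(\tau)$ is the delicate technical point.
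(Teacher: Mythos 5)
Your proposal follows the same route as the paper --- Itô's formula for $f^{-p}$, random time change by $T$, integrating factor $e^{-c_p(R)r}$ to produce a nonnegative local (hence true) supermartingale --- and you correctly isolate the real difficulty: the supermartingale step only yields the weighted estimate $\E\bigl[e^{-c_p(R)T(\tau)}\,f^{-p}(\tau,X_\tau)\bigr]\le 1$, not the additive form in the lemma. The paper itself is terse at exactly this point. One smaller omission on your side: the Itô step needs the local-time correction at the cut locus of $x$ (this is why the paper writes the inequality ``modulo local martingale differentials'' and cites Kuwada--Philipowski), since $\rho(t,\cdot)=d_{g(t)}(x,\cdot)$ is only $C^2$ away from $\Cut_{g(t)}(x)\cup\{x\}$; you treat $f^{-p}(t,X_t)$ as if Itô's formula applied directly.

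However, your proposed Doob--Meyer fix does not close the gap you identified, and in fact is circular. Writing $U_r = e^{-c_p(R)r}\tilde Y_r = 1 + \tilde N_r - \tilde A_r$ and then multiplying by $e^{c_p(R)r}$ gives
\[
e^{c_p(R)T(\tau)}\bigl(\tilde N_{T(\tau)} - \tilde A_{T(\tau)}\bigr) \;=\; \tilde Y_{T(\tau)} - e^{c_p(R)T(\tau)},
\]
so asserting that the expectation of this quantity is $\le 0$ is precisely the lemma you are trying to prove, not a step towards it. The supermartingale property of $\tilde N$ controls $\E[\tilde N_{T(\tau)}]\le 0$, but not $\E[e^{c_p(R)T(\tau)}\tilde N_{T(\tau)}]$, because the positive weight $e^{c_p(R)T(\tau)}$ is correlated with $\tilde N_{T(\tau)}$; multiplying a mean-nonpositive random variable by a positive, correlated weight can make the mean positive. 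What you \emph{have} rigorously established is the weighted bound $\E\bigl[e^{-c_p(R)T(\tau)}f^{-p}(\tau,X_\tau)\bigr]\le 1$, which is exactly what the paper uses in the subsequent lemma: there one takes $\tau=\sigma(t)\wedge\tau^n$ so that $T(\tau)\le t$ deterministically, whence $\E[f^{-p}(\tau,X_\tau)]\le e^{c_p(R)t}$. So your instinct that something is missing is sound, and the honest resolution is that the usable (and provable) statement is the weighted one --- your Doob--Meyer upgrade should be dropped rather than patched.
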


\begin{proof}
  Applying It\^o's formula to the process $Y_r := f^{-p}(\sigma(r),
  X_{\sigma(r)})$ ($0 \leq r < T(\tau_R)$) we obtain
  \begin{align*}
    dY_r & \stackrel{\m}{\leq} \left( \frac{\partial f^{-p}}{\partial u} + \frac{1}{2} \Delta_{g(\sigma(r))}(f^{-p}) \right) (\sigma(r), X_{\sigma(r)}) \dot \sigma(r) \, dr\\
    & = f^{-p}(\sigma(r), X_{\sigma(r)}) \left[ f^{p+2} \left( \frac{\partial f^{-p}}{\partial u} + \frac{1}{2} \Delta_{g(\sigma(r))}(f^{-p}) \right) \right] (\sigma(r), X_{\sigma(r)}) \, dr\\
    & \leq c_p(R)\, f^{-p}(\sigma(r), X_{\sigma(r)}) \, dr\\
    & = c_p(R)\, Y_r \, dr,
  \end{align*}
  where the inequality (modulo differentials of local martingales) in the first step is due to the local time at
  the cut-locus, see
  \cite[Theorem~2]{kuwadaphilipowski}. 
  Since $Y_0 = 1$, it follows that
  \[
  \E \left[ f^{-p}(\tau, X_\tau) \right] = \E \! \left[ Y_{T(\tau)}
  \right] \leq \E \left[ e^{c_p(R) T(\tau)} \right]. \qedhere
  \]
\end{proof}

\begin{lemma}
  If $c_p(R)$ is finite, we have
  \[
  \lim_{s \uparrow \tau_R} T(s) = +\infty
  \]
  almost surely.
\end{lemma}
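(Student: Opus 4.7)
The plan is to apply the preceding lemma to the time-changed stopping time $\tau=\sigma(n)$, exploit that $T(\sigma(n))\leq n$ holds by construction, and then send $n\to\infty$.

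First I would observe that $\sigma(n)$ is a stopping time bounded above by $\tau_R$ and that $T(\sigma(n))\leq n$ deterministically: if $\sigma(n)<\tau_R$ then continuity of $T$ gives $T(\sigma(n))=n$ exactly, while if $\sigma(n)=\tau_R$ then by definition of $\sigma$ the process $T$ never reaches $n$ on $[0,\tau_R)$, so $T_\infty\leq n$. The preceding lemma applied with $\tau=\sigma(n)$ therefore yields
\[
\E\!\left[f^{-p}(\sigma(n),X_{\sigma(n)})\right]\leq\E\!\left[e^{c_p(R)\,T(\sigma(n))}\right]\leq e^{c_p(R)\,n}<\infty.
\]

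Next I would introduce the bad event $A_n:=\{T_\infty<n\}$ and show $\PP(A_n)=0$. Since $f\leq 1$ forces $T(s)\geq s$, the event $\{\tau_R=\infty\}$ is already contained in $\{T_\infty=\infty\}$, so $A_n\subset\{\tau_R<\infty\}$. On $A_n$ the definition of $\sigma$ forces $\sigma(n)=\tau_R<\infty$, and continuity of $s\mapsto d_{g(s)}(x,X_s)$ together with the definition of $\tau_R$ gives $d_{g(\tau_R)}(x,X_{\tau_R})=R$; hence $f(\sigma(n),X_{\sigma(n)})=\cos(\pi/2)=0$ and $f^{-p}(\sigma(n),X_{\sigma(n)})=+\infty$ on $A_n$. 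Combined with the finite upper bound above this forces $\PP(A_n)=0$. Since $\{T_\infty<\infty\}=\bigcup_n A_n$, continuity of measure gives $\PP(T_\infty<\infty)=0$, which is exactly the claim.

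The only subtlety I foresee is that $f^{-p}(\sigma(n),X_{\sigma(n)})$ must be interpreted as an extended-real quantity equal to $+\infty$ on the event $\{\sigma(n)=\tau_R<\infty\}$; the preceding lemma remains meaningful in $[0,+\infty]$ since both sides are non-negative, so no real obstacle arises. The key conceptual point is the choice of stopping time $\sigma(n)$, which is engineered so that the exponential integral on the right-hand side of the preceding lemma is automatically bounded by $e^{c_p(R)n}$, while the blow-up of $f^{-p}$ at $\tau_R$ on $A_n$ provides the contradiction.
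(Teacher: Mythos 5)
Your proof is correct, and it takes a genuinely different route from the paper's. Where you apply the preceding lemma directly with $\tau=\sigma(n)$ and extract a contradiction from the blow-up of $f^{-p}$ at the spatial boundary, the paper instead introduces an auxiliary localization: it sets $\tau^n:=\inf\{t\geq 0: f(t,X_t)\leq 1/n\}$, applies the preceding lemma (with $p=1$) to the stopped time $\tau=\sigma(t)\wedge\tau^n$, and reads off the Chebyshev-type estimate
\[
n\,\PP\{\tau^n\leq\sigma(t)\}\ \leq\ \E\!\left[f^{-1}\bigl(\sigma(t)\wedge\tau^n,X_{\sigma(t)\wedge\tau^n}\bigr)\right]\ \leq\ e^{c_1(R)t},
\]
then sends $n\to\infty$ (using $\tau^n\uparrow\tau_R$) to obtain $\PP\{\sigma(t)=\tau_R\}=0$, and concludes via $\{\lim_{s\uparrow\tau_R}T(s)<\infty\}=\bigcup_{t\in\Q_+}\{\sigma(t)=\tau_R\}$. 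The two conclusions amount to the same event decomposition (your $A_n=\{T_\infty<n\}$ versus the paper's $\{\sigma(t)=\tau_R\}=\{T_\infty\leq t\}$), but the mechanism differs. Your version is shorter; the paper's has the technical advantage that stopping at $\tau^n$ keeps $f^{-p}$ (and hence $Y_r$) bounded by $n^p$ before expectations are taken, so that the preceding lemma is only ever invoked with quantities that are manifestly finite. Your argument invokes the preceding lemma with the possibility that $\E[f^{-p}(\tau,X_\tau)]=+\infty$ a priori, which is fine in the extended reals but leans on the supermartingale argument underlying that lemma being valid up to and including the random time $T_\infty$ — precisely the subtlety you flag. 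Since the positive (local) supermartingale $e^{-c_p(R)r}Y_r$ converges a.s.\ to a finite limit as $r\uparrow T_\infty$, this does hold, and your contradiction (finite expectation versus $+\infty$ on a set of positive measure) goes through. In short: correct, cleaner in exposition, at the price of a slightly heavier implicit reliance on the boundary behaviour in the preceding lemma, which the paper's $\tau^n$-localization avoids.
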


\begin{proof}
  Let $\tau^n := \inf \{ t \geq 0\colon\, f(t,X_t) \leq 1/n\}$. The
  previous lemma with $p=1$ and $\tau = \sigma(t) \wedge \tau^n$
  implies that for each $t \geq 0$
  \[
  n \PP \left\{ \tau^n \leq \sigma(t) \right\} \leq \E \left[
    f^{-1}(\sigma(t) \wedge \tau^n, X_{\sigma(t) \wedge \tau^n})
  \right] \leq e^{c_1(R) t}
  \]
  and consequently, since $\tau^n \uparrow \tau_R$,
  \[
  \PP \left\{ \sigma(t) = \tau_R \right\} = \lim_{n \to \infty} \PP
  \left\{ \tau^n \leq \sigma(t) \right\} = 0.
  \]
  Since
  \[
  \left\{ \lim_{s \uparrow\tau_R} T(s) < \infty\right\} =
  \bigcup_{0\leq t\in\Q} \{ \sigma(t) = \tau_R \},
  \]
  the claim follows.
\end{proof}

\begin{lemma}[cf.\ {\cite[Lemma~4.3]{thalmaierwang}} for the case of a fixed metric]\label{firstfactor}
  Assume that
  \begin{equation*}
    \frac{\partial g}{\partial t} \leq \Ric_{g(t)}.
  \end{equation*}
  and that there exists $r_0 < 0$ such that $C(x,r_0)$ defined in
  \eqref{cxr0} is finite. Then, for all $t \geq 0$,
  \begin{equation}\label{estm}
    \E \left[ \left| \int_0^{t \wedge \tau_R} \left( \parr_{0,s}^{\Riem} \right)^{-1} \Theta_{0,s} \, \dot\ell(s) \cdot dB_s \right|^p \right] \leq \frac{C_p \left(2 c_p(R) / p \right)^{p/2}}{\left(1 - \exp(-2c_p(R) t/p)\right)^{p/2+1}} |v|^p.
  \end{equation}
  where $C_p$ is the constant in the Burkholder-Davis-Gundy inequality
  with exponent $p$.
\end{lemma}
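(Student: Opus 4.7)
The strategy is to reduce the left-hand side of \eqref{estm} to a deterministic integral by a judicious choice of the auxiliary process $\ell$ and then combine the Burkholder-Davis-Gundy inequality with the exponential estimate $\E[f^{-p}(\tau, X_\tau)]\leq \E[e^{c_p(R)\,T(\tau)}]$ already established.

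The first step is an operator-norm bound on the damped parallel transport. Writing the defining ODE of $(\parr_{0,s}^{\Riem})^{-1}\Theta_{0,s}$, its generator is $-\tfrac{1}{2}(\Ric_{g(s)}-\partial g/\partial s)^{\#}$ pulled back by Riemann parallel transport, which is a nonnegative symmetric operator on $T_xM$ under the hypothesis $\partial g/\partial t\leq\Ric_{g(t)}$. A Gr\"onwall-type argument then gives $\|(\parr_{0,s}^{\Riem})^{-1}\Theta_{0,s}\|_{\mathrm{op}}\leq 1$, so that $|(\parr_{0,s}^{\Riem})^{-1}\Theta_{0,s}\,\dot\ell(s)|\leq|\dot\ell(s)|$. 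The Burkholder-Davis-Gundy inequality with constant $C_p$ thus dominates the left-hand side of \eqref{estm} by $C_p\,\E\bigl[\bigl(\int_0^{t\wedge\tau_R}|\dot\ell(s)|^2\,ds\bigr)^{p/2}\bigr]$.

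The second step is the choice of $\ell$. Following the philosophy of \cite{thalmaierwang}, I would take $\ell(s)=v\,\phi(T(s))$ with the deterministic profile
\[
\phi(r):=\frac{(e^{-\beta r}-e^{-\beta t})_+}{1-e^{-\beta t}},\qquad \beta:=2c_p(R)/p,
\]
so that $\ell$ is adapted, $\ell(0)=v$, and $\ell$ vanishes after the stopping time $\sigma(t)$. Because $f\leq 1$ implies $T(s)\geq s$, we have $\sigma(t)\leq t\wedge\tau_R$ almost surely, and the change of variable $r=T(s)$ (with Jacobian $ds=f^2(s,X_s)\,dr$) transforms the $L^2$-norm of $\dot\ell$ into
\[
\int_0^{t\wedge\tau_R}|\dot\ell(s)|^2\,ds=\frac{\beta^2|v|^2}{(1-e^{-\beta t})^2}\int_0^t\frac{e^{-2\beta r}}{f^2(\sigma(r),X_{\sigma(r)})}\,dr,
\]
whose upper limit is now deterministic. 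Passing to the $p/2$-moment and applying Minkowski's integral inequality in $L^{p/2}(\PP)$ together with the estimate $\E[f^{-p}(\sigma(r),X_{\sigma(r)})]\leq e^{c_p(R)r}$ from the earlier lemma, the choice $\beta=2c_p(R)/p$ matches the exponential rate so that the remaining exponential integral produces a factor of the form $((1-e^{-\beta t})/\beta)^{p/2}$. Collecting all powers of $\beta$ and of $1-e^{-\beta t}$ yields a bound of the type stated in \eqref{estm}.

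The main obstacle is twofold. First, one has to make the time change $r=T(s)$ rigorous on the random interval $[0,\sigma(t)\wedge\tau_R]$ and verify, using $T(\sigma(r))=r$ for $r\in[0,t]$ and $\sigma(r)\leq t\wedge\tau_R\leq\tau_R$, that the $f^{-p}$-bound of the preceding lemma indeed applies under the expectation. Second, the \emph{precise} exponent $p/2+1$ of $1-\exp(-2c_p(R)t/p)$ in the denominator is delicate: a straightforward Minkowski step produces the exponent $p/2$, so reaching the stated form requires a more careful application --- either an iterated H\"older decomposition of the integrand $e^{-2\beta r}/f^2(\sigma(r))$ or a Jensen-type inequality with a weighted probability measure --- combined with an optimization over the parameter $\beta$.
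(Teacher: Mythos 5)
Your proposal is essentially correct and, up to one technical choice, matches the paper's own proof: the same $\ell(s) = v\,\phi(T(s))$ with $\phi(r) = \frac{(e^{-\beta r}-e^{-\beta t})_+}{1-e^{-\beta t}}$, $\beta=2c_p(R)/p$ (the paper writes this as $h_1\circ h_0$ with $h_0(s)=T(s)\wedge t$), the same bound $\|\Theta_{0,s}\|\le1$ from $\partial g/\partial t\le\Ric$, the same Burkholder--Davis--Gundy step, the same change of variable $r=T(s)$ with Jacobian $f^2$, and the same appeal to $\E[f^{-p}(\sigma(r),X_{\sigma(r)})]\le e^{c_p(R)r}$. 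The one genuine difference is that you pass the $p/2$-th power inside via Minkowski's integral inequality in $L^{p/2}(\PP)$, whereas the paper writes $|\phi'(r)|^2 f^{-2} = (|\phi'(r)|f^{-2})\cdot|\phi'(r)|$ and applies Jensen's inequality with respect to the probability measure $|\phi'(r)|\,dr$ on $[0,t]$; both require $p\ge2$ and both produce the bound $C_p|v|^p\bigl(\frac{\beta}{1-e^{-\beta t}}\bigr)^{p/2}$ after computing $\int_0^t e^{-\beta r}\,dr$ exactly.

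Your worry at the end about the exponent $p/2+1$ is unfounded. Since $0<1-\exp(-2c_p(R)t/p)<1$, the bound with $(1-e^{-\beta t})^{p/2}$ in the denominator is \emph{stronger} than, and hence implies, the stated bound with $(1-e^{-\beta t})^{p/2+1}$. The paper's own Jensen computation also gives $p/2$; the $p/2+1$ in the statement simply corresponds to further bounding $\int_0^t e^{-2c_p(R)r/p}\,dr\le p/(2c_p(R))$. No iterated H\"older decomposition, weighted Jensen refinement, or optimization over $\beta$ is needed --- your argument as written already yields (a slight sharpening of) the stated estimate once you observe this monotonicity.
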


\begin{proof}
  Fix $t > 0$. As a consequence of the previous lemma we have
  \[
  \sigma(r) = \inf \{ s \in [0, \tau_R): T(s) \geq r \}.
  \]
  Moreover, $\sigma(r) \leq r$, $\sigma(r) \leq \tau_R$, $T(\sigma(r))
  = r$ and
  \[
  \dot \sigma(r) = {1}/{\dot T(\sigma(r))} = f^2(\sigma(r),
  X_{\sigma(r)})
  \]
  for all $r \geq 0$, and $\sigma(T(s)) = s$ for all $s \in [0,
  \tau_R)$. Now let
  \begin{equation*}
    h_0(s) := \int_0^{s \wedge \sigma(t)} \frac{1}{f^2(r, X_r)} \, dr = T(s \wedge \sigma(t)) = T(s) \wedge t
  \end{equation*}
  and
  \begin{equation*}
    h_1(r) = 1 - \frac{1-\exp(-2c_p(R) r/p)}{1 - \exp(-2c_p(R) t/p)},
  \end{equation*}
  and define
  \begin{equation*}
    \ell(s) := h_1(h_0(s))\, v.
  \end{equation*}
  Note that $h_1(0) = 1$, $h_1(t) = 0$ and $\dot h_1(r) < 0$ for all
  $r \geq 0$, so that $|\dot h_1(r)|\,dr$ is a probability measure on
  $[0, t]$.

  Since $\frac{\partial g}{\partial t} \leq \Ric$ implies that
  $|\Theta_{0,s}| \leq 1$, we obtain using the Burkholder-Davis-Gundy
  inequality that
  \[
  \E \left[ \left| \int_0^{t \wedge \tau_R}
      \left(\parr_{0,s}^{\Riem}\right)^{-1}\Theta_{0,s}\,\dot\ell(s)
      \cdot dB_s \right|^p \right] \leq C_p \, \E \left[ \left|
      \int_0^{t \wedge \tau_R} \bigl| \dot \ell(s) \bigr|^2\, ds
    \right|^{p/2} \right].
  \]
  Moreover, since
  \begin{equation*}
    \dot h_0(s) = 
    \begin{cases}
      f^{-2}(s, X_s) & \mbox{if } s < \sigma(t),\\
      0 & \mbox{if } s > \sigma(t),
    \end{cases}
  \end{equation*}
  we have
  \begin{align*}
    \E \left[ \left| \int_0^{t \wedge \tau_R} \bigl| \dot \ell(s) \bigr|^2\, ds \right|^{p/2} \right] & = \E \left[ \left| \int_0^{t \wedge \tau_R} |\dot h_1(h_0(s))|^2\, |\dot h_0(s)|^2\, ds \right|^{p/2} \right] |v|^p\\
    & = \E \left[ \left| \int_0^{\sigma(t)} |\dot h_1(h_0(s))|^2\, \frac{1}{f^4(s, X_s)}\, ds \right|^{p/2} \right] |v|^p\\
    & = \E \left[ \left| \int_0^t |\dot h_1(h_0(\sigma(r)))|^2 \,\frac{1}{f^4(\sigma(r), X_{\sigma(r)})} \dot \sigma(r)\,dr \right|^{p/2} \right] |v|^p\\
    & = \E \left[ \left| \int_0^t |\dot h_1(r)|^2 \,\frac{1}{f^2(\sigma(r), X_{\sigma(r)})} \,dr \right|^{p/2} \right] |v|^p\\
    & \leq \E \left[ \int_0^t |\dot h_1(r)|^{p/2+1} \,\frac{1}{f^p(\sigma(r), X_{\sigma(r)})} \,dr \right] |v|^p\\
    & = \int_0^t |\dot h_1(r)|^{p/2+1} \,\E \left[ \frac{1}{f^p(\sigma(r), X_{\sigma(r)})} \right] dr \; |v|^p\\
    & \leq |v|^p \int_0^t |\dot h_1(r)|^{p/2+1} \,e^{c(R,f)r} \, dr,
  \end{align*}
  where in the fifth step we used Jensen's inequality with respect to
  the probability measure $|\dot h_1(r)|\,dr$ on $[0,t]$ (recall that
  $p \geq 2$).

  Since
  \begin{equation*}
    \dot h_1(r) = -\frac{2 c_p(R) / p}{1 - \exp(-2c_p(R) t/p)} \, e^{- 2c_p(R)r/p}
  \end{equation*}
  and consequently
  \begin{equation*}
    |\dot h_1(r)|^{p/2+1} \,e^{c_p(R)r} = \left( \frac{2 c_p(R) / p}{1 - \exp(-2c_p(R) t/p)} \right)^{p/2+1} e^{- 2c_p(R)r/p},
  \end{equation*}
  the claim follows.
\end{proof}

\subsection{Estimation of the second factor}
To estimate the second factor of \eqref{estimate} we start by
estimating the inverse of the damped parallel transport.

\begin{lemma}[cf.\ {\cite[Lemma~2.12]{thalmaierwang}}]\label{invdamp}
  For $y \in N$ let $\kappa(y)$ be the supremum of the sectional
  curvatures of $N$ at $y$. Moreover, for $(s, x) \in \R_+ \times M$,
  let $\lambda_1(s, x) \geq \ldots \geq \lambda_m(s, x) \geq 0$ be the
  eigenvalues of the map $du(s, x)^* du(s, x): T_x M \to T_x M$. Then
  \begin{equation*}
    \bigl\| \tilde \Theta_{0,t}^{-1} \bigr\| \leq \exp \left( \frac{1}{2} \int_0^t L_s\,ds \right),
  \end{equation*}
  where
  \begin{equation*}
    L_s := 
    \begin{cases}
      |du|^2(s,X_s)\, \kappa(u(s,X_s)) & \mbox{if } \kappa(u(s,X_s)) \geq 0\\
      \sum\limits_{i=2}^m \lambda_i(s,X_s) \, \kappa(u(s,X_s)) &
      \mbox{if } \kappa(u(s,X_s)) \leq 0.
    \end{cases}
  \end{equation*}
\end{lemma}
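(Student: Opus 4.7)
The plan is to reduce the claim to a Gr\"onwall argument in a fixed tangent space via parallel transport along $\X$. Set
\[
A_t:=(\parr_{0,t})^{-1}\tilde\Theta_{0,t}\in\mathrm{End}(T_{\X_0}N)\quad\text{and}\quad\Psi_t:=A_t^{-1}.
\]
Since $\parr_{0,t}$ is an isometry, $\|\tilde\Theta_{0,t}^{-1}\|_{\mathrm{op}}=\|\Psi_t\|_{\mathrm{op}}$, so it is enough to bound $\|\Psi_t\|_{\mathrm{op}}$. The covariant equation defining $\tilde\Theta_{0,t}$ has no martingale part, so $A_t$ solves the pathwise ODE $\dot A_t=-\tfrac12\mathcal{R}_t A_t$, where
\[
\mathcal{R}_t v := \sum_{i=1}^m (\parr_{0,t})^{-1}\tilde R\bigl(\parr_{0,t} v,\,f_t^i\bigr)f_t^i,\qquad f_t^i:=du(t,X_t)\xi_t^i.
\]
The pair-symmetry of $\tilde R$ makes $\mathcal{R}_t$ self-adjoint on $T_{\X_0}N$, and differentiating $A_t\Psi_t=\Id$ gives $\dot\Psi_t=\tfrac12\,\Psi_t\mathcal{R}_t$.

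Next, I would bound the top eigenvalue of $\mathcal{R}_t$ by $L_t$. For unit $v\in T_{\X_0}N$, set $\bar v:=\parr_{0,t}v$ and decompose $f_t^i=\langle f_t^i,\bar v\rangle\bar v+w_i$ with $w_i\perp\bar v$. Using $\tilde R(\bar v,\bar v)=0$ and $\langle\tilde R(\cdot,\cdot)\bar v,\bar v\rangle=0$,
\[
\langle\mathcal{R}_t v,v\rangle = \sum_{i=1}^m\langle\tilde R(\bar v,w_i)w_i,\bar v\rangle = \sum_{i=1}^m\Sect(\bar v,w_i)\,|w_i|^2 \leq \kappa(u(t,X_t))\sum_{i=1}^m|w_i|^2.
\]
By Pythagoras $\sum_i|w_i|^2=|du|^2(t,X_t)-|du(t,X_t)^*\bar v|^2$, and $|du(t,X_t)^*\bar v|^2\in[0,\lambda_1(t,X_t)]$ with the upper bound attained when $\bar v$ lies in a top eigendirection of $du\,du^*$. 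Taking the worst case in $v$ produces exactly $L_t$: for $\kappa\geq0$ one discards the subtracted term and obtains $\kappa|du|^2$; for $\kappa\leq0$ one keeps it at its extremum and obtains $\kappa(|du|^2-\lambda_1)=\kappa\sum_{i=2}^m\lambda_i$.

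Finally, I would apply Gr\"onwall to $Q_t:=\Psi_t\Psi_t^*$, which is positive self-adjoint with $Q_0=\Id$ and $\|Q_t\|_{\mathrm{op}}=\|\Psi_t\|_{\mathrm{op}}^2$. Using $\mathcal{R}_t^*=\mathcal{R}_t$ one computes $\dot Q_t=\Psi_t\mathcal{R}_t\Psi_t^*$, so for any unit $w\in T_{\X_0}N$,
\[
\tfrac{d}{dt}\langle Q_t w,w\rangle = \langle\mathcal{R}_t\Psi_t^* w,\Psi_t^* w\rangle \leq L_t\,|\Psi_t^* w|^2 = L_t\,\langle Q_t w,w\rangle
\]
by the previous step. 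Gr\"onwall yields $\langle Q_t w,w\rangle\leq\exp\!\bigl(\int_0^t L_s\,ds\bigr)$, and taking the supremum over unit $w$ gives $\|\Psi_t\|_{\mathrm{op}}^2\leq\exp\!\bigl(\int_0^t L_s\,ds\bigr)$, which is the asserted bound.

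The main obstacle is the middle step: one must recognise that the two formulas in the definition of $L_s$ are precisely the suprema of the quadratic form $v\mapsto\langle\mathcal{R}_t v,v\rangle/|v|^2$ under the two possible signs of $\kappa$. The negatively curved case relies on the sharp inequality $|du^*\bar v|^2\leq\lambda_1$ so that, after multiplication by the negative $\kappa$, only $\sum_{i\geq 2}\lambda_i$ survives; the positively curved case is looser, simply throwing away $|du^*\bar v|^2$.
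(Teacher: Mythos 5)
Your proof is correct and follows essentially the same strategy as the paper's: the core of the argument is the curvature estimate bounding the quadratic form $\sum_i\langle\tilde R(\,\cdot\,, du\,\xi^i_t)\,du\,\xi^i_t,\,\cdot\,\rangle$ by $L_t$, with the $\kappa\geq 0$ case discarding the cross-term $\sum_i\langle\,\cdot\,, du\,\xi^i_t\rangle^2$ and the $\kappa\leq 0$ case controlling it via the top eigenvalue $\lambda_1$ of $du^*du$. The paper obtains the norm bound on $\tilde\Theta_{0,t}^{-1}$ by lower-bounding $\tfrac{d}{dt}|\tilde\Theta_{0,t}w|^2\geq -L_t|\tilde\Theta_{0,t}w|^2$ directly, whereas you phrase the same estimate dually through the inverse $\Psi_t$ and the Gram operator $Q_t=\Psi_t\Psi_t^*$ with a Gr\"onwall step; this is a cosmetic reorganisation rather than a different route.
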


\begin{proof}
  Let $w \in T_{\tilde X_0} N$. Using the definitions of $\tilde
  \Theta_{0,t}$ and $\kappa$ we obtain on $\{\kappa(u(t,X_t)) \geq
  0\}$
  \begin{align*}
    \frac{d}{dt} \big| \tilde \Theta_{0,t} w \bigr|^2 & = -\sum_{i=1}^m \left\langle \tilde R\bigl(\tilde\Theta_{0,t} w, du(t, X_t)\,\xi_t^i \bigr) du(t, X_t)\,\xi_t^i,\tilde\Theta_{0,t} w\right\rangle \\
    & \geq -\sum_{i=1}^m \kappa(u(t,X_t)) \left[ \bigl| \tilde \Theta_{0,t} w \bigr|^2 \left| du(t, X_t)\,\xi_t^i \right|^2 - \left\langle \tilde\Theta_{0,t} w, du(t, X_t) \,\xi_t^i \right\rangle^2 \right]\\
    & \geq  -\sum_{i=1}^m \kappa(u(t,X_t))\, \bigl| \tilde \Theta_{0,t} w \bigr|^2 \,\left| du(t, X_t) \,\xi_t^i \right|^2\\
    & =  -\kappa(u(t,X_t)) \,\bigl| \tilde \Theta_{0,t} w \bigr|^2  \,|du|^2(t, X_t)\\
    & = -L_t\, \bigl| \tilde \Theta_{0,t} w \bigr|^2.
  \end{align*}
  On the set $\{\kappa(u(t,X_t)) \leq 0\}$ we moreover use the fact
  that
  \[
  \sum_{i=1}^m \left\langle \tilde\Theta_{0,t} w, du(t, X_t) \xi_t^i
  \right\rangle^2 \leq \lambda_1(t, X_t) \,|\tilde \Theta_{0,t} w|^2
  \]
  and obtain
  \begin{align*}
    \frac{d}{dt} \bigl| \tilde \Theta_{0,t} w \bigr|^2 & \geq  -\sum_{i=1}^m \kappa(u(t,X_t)) \left[ \bigl| \tilde \Theta_{0,t} w \bigr|^2 \left| du(t, X_t) \xi_t^i \right|^2 - \left\langle\tilde\Theta_{0,t} w, du(t, X_t) \xi_t^i \right\rangle^2 \right]\\
    & \geq  -\kappa(u(t,X_t))\, \bigl| \tilde \Theta_{0,t} w \bigr|^2 \,\sum_{i=2}^m \lambda_i (t, X_t)\\
    & = -L_t\, \bigl| \tilde \Theta_{0,t} w \bigr|^2
  \end{align*}
  as well.
\end{proof}

\begin{lemma}[cf.\ {\cite[Lemma~4.5~(1)]{thalmaierwang}}]\label{sf}
  For any stopping time $\tau$ we have
  \[
  \E \left[ |\A_{\defo}(\X)_\tau|^q \right] \leq C_q \, \E \left[
    \left( \int_0^{\tau} |du|^2(s, X_s) \exp \left( \int_0^s
        |du|^2(r,X_r) \, \kappa_+(u(r,X_r)) \,dr \right) ds
    \right)^{q/2} \right], 
  \]
  where $\kappa_+(y) := \max(\kappa(y),0)$.
\end{lemma}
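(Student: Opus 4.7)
My plan is to bound $\A_{\defo}(\X)$ via the Burkholder-Davis-Gundy inequality applied to the continuous local martingale $\A_{\defo}(\X)$ and then to control its quadratic variation using Lemma~\ref{invdamp}. Since $B$ is a standard $T_xM$-valued Brownian motion and $\parr_{0,s}^{\Riem}\colon T_xM\to T_{X_s}M$ is isometric with respect to the fixed metric on $T_xM$ and $g(s)$ on $T_{X_s}M$, the quadratic variation of the vector-valued stochastic integral in~\eqref{Eq:AntiDef} is
\[
\langle \A_{\defo}(\X) \rangle_t = \int_0^t \sum_{i=1}^m \bigl| \tilde\Theta_{0,s}^{-1}\, du(s, X_s)\, \xi_s^i \bigr|^2 \, ds.
\]
I would then factor out the operator norm of $\tilde\Theta_{0,s}^{-1}$ and recognize $\sum_{i=1}^m |du(s,X_s)\,\xi_s^i|^2 = |du|^2(s,X_s)$, since $(\xi_s^i)$ is a $g(s)$-orthonormal frame, to obtain
\[
\langle \A_{\defo}(\X) \rangle_t \leq \int_0^t \bigl\| \tilde\Theta_{0,s}^{-1} \bigr\|^2 \, |du|^2(s, X_s) \, ds.
\]

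The second step, which is the only genuinely non-automatic point of the argument, is to observe that the mixed expression $L_s$ of Lemma~\ref{invdamp} is always dominated by the cleaner quantity $|du|^2(s,X_s)\,\kappa_+(u(s,X_s))$ appearing in the target statement. On the set $\{\kappa(u(s, X_s)) \geq 0\}$ the two expressions coincide, while on $\{\kappa(u(s, X_s)) \leq 0\}$ one has $L_s \leq 0 \leq |du|^2(s, X_s)\,\kappa_+(u(s, X_s))$. Hence Lemma~\ref{invdamp} yields the pointwise bound
\[
\bigl\| \tilde\Theta_{0,s}^{-1} \bigr\|^2 \leq \exp\!\left( \int_0^s |du|^2(r, X_r)\, \kappa_+(u(r, X_r))\, dr \right).
\]

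Finally, combining the two previous displays gives
\[
\langle \A_{\defo}(\X) \rangle_\tau \leq \int_0^\tau |du|^2(s, X_s) \exp\!\left( \int_0^s |du|^2(r, X_r)\,\kappa_+(u(r, X_r))\, dr \right) ds,
\]
and the Burkholder-Davis-Gundy inequality applied to $\A_{\defo}(\X)$ stopped at $\tau$,
\[
\E\bigl[ |\A_{\defo}(\X)_\tau|^q \bigr] \leq C_q \, \E\bigl[ \langle \A_{\defo}(\X) \rangle_\tau^{q/2} \bigr],
\]
delivers the claim with the universal BDG constant $C_q$. I do not anticipate any serious obstacle: the argument is essentially a bookkeeping combination of the Itô-isometry computation of the quadratic variation, the operator-norm estimate from Lemma~\ref{invdamp} rewritten in the $\kappa_+$ form, and the BDG inequality.
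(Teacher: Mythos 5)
Your proof is correct and follows essentially the same route as the paper: apply Burkholder--Davis--Gundy to $\A_{\defo}(\X)$, bound the quadratic variation by factoring out the operator norm $\|\tilde\Theta_{0,s}^{-1}\|$, and control that norm via Lemma~\ref{invdamp} in the $\kappa_+$ form. You merely spell out two steps the paper leaves implicit, namely the verification that $L_s\leq |du|^2(s,X_s)\,\kappa_+(u(s,X_s))$ and the passage from the Hilbert--Schmidt norm of $\tilde\Theta_{0,s}^{-1}\,du(s,X_s)$ to the operator-norm bound.
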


\begin{proof}
  By the Burkholder-Davis-Gundy inequality we have
  \[
  \E \left[ |\A_{\defo}(\X)_\tau|^q \right] \leq C_q \E \left[ \left|
      \int_0^{\tau} \bigl| \tilde \Theta_{0,s}^{-1} \, du(s, X_s)
      \bigr|^2 ds \right|^{q/2} \right],
  \]
  and by the previous lemma
  \begin{equation*}
    \bigl\| \tilde \Theta_{0,s}^{-1} \bigr\|^2 \leq \exp \left( \int_0^s |du|^2(r, X_r) \,\kappa_+(u(r, X_r)) \,dr \right). \qedhere
  \end{equation*}
\end{proof}

\begin{lemma}[cf.\ {\cite[Lemma~4.5~(2)]{thalmaierwang}}]\label{ch}
  If $N$ is simply connected and has non-positive curvature, then for
  any bounded stopping time $\tau$
  \[
  \E \left[ |\A_{\defo}(\X)_\tau|^2 \right] \leq \E \left[
    \dist_N\bigl(u(\tau, X_{\tau}), u(0,x)\bigr)^2 \right].
  \]
\end{lemma}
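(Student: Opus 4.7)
The plan is to exploit the Cartan-Hadamard hypothesis on $N$ in two ways: (i) to derive the one-sided bound $\|\tilde\Theta_{0,s}^{-1}\|\leq 1$, and (ii) via the Hessian comparison theorem applied to the squared distance to $\X_0$. Set $o:=\X_0=u(0,x)$ and $F(y):=\dist_N(y,o)^2$. Since $N$ is simply connected with non-positive sectional curvature, $\exp_o\colon T_oN\to N$ is a global diffeomorphism, $F\in C^\infty(N)$, and the classical Hessian comparison theorem yields $\Hess F\geq 2\,h$ pointwise.

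The bound on $\tilde\Theta^{-1}$ follows from the defining ODE: conjugating by the parallel transport in $N$ along $\X$ (which is an isometry) and using non-positive curvature, a short computation shows
\[
\frac{d}{dt}\bigl|\tilde\Theta_{0,t}w\bigr|^2 = -\sum_{i=1}^m \bigl\langle\tilde R(\tilde\Theta_{0,t}w,\, du(t,X_t)\xi_t^i)\,du(t,X_t)\xi_t^i,\,\tilde\Theta_{0,t}w\bigr\rangle \geq 0,
\]
so $|\tilde\Theta_{0,t}w|\geq|w|$. Applying It\^o's isometry to the explicit representation \eqref{Eq:AntiDef} then yields
\[
\E\bigl[|\A_{\defo}(\X)_\tau|^2\bigr] = \E\!\left[\int_0^\tau \sum_{i=1}^m \bigl|\tilde\Theta_{0,s}^{-1}\,du(s,X_s)\xi_s^i\bigr|^2\,ds\right] \leq \E\!\left[\int_0^\tau \sum_{i=1}^m \bigl|du(s,X_s)\xi_s^i\bigr|^2\,ds\right]\!.
\]

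The remaining step is to bound the right-hand side by $\E[F(\X_\tau)]$. Since $u$ is space-time harmonic, the process $\X_t=u(t,X_t)$ is an $N$-valued (local) martingale, so the geometric It\^o formula applied to $F\circ\X$ decomposes $F(\X_t)-F(\X_0)$ as a local martingale plus $\tfrac12\int_0^t \Hess F(\X_s)(d\X_s,d\X_s)$. Using $F(\X_0)=0$, the Hessian lower bound $\Hess F\geq 2h$, and the fact that $d\X_s\cdot d\X_s$ contributes $\sum_i|du(s,X_s)\xi_s^i|^2\,ds$ to the Hessian trace, taking expectations yields
\[
\E\bigl[F(\X_\tau)\bigr] \geq \E\!\left[\int_0^\tau \sum_{i=1}^m \bigl|du(s,X_s)\xi_s^i\bigr|^2\,ds\right]\!.
\]
Chaining the two inequalities produces the claim.

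The main technical obstacle in both steps is promoting local martingales to true martingales and justifying It\^o's isometry, since neither $F$ nor the integrand in \eqref{Eq:AntiDef} need be globally bounded. This is handled by standard localization along $\tau\wedge\tau_R$, for which all relevant processes are bounded on the stopped interval; passing to the limit $R\to\infty$ via monotone convergence on the right-hand integrals and Fatou's lemma on $\E[|\A_{\defo}(\X)_\tau|^2]$ and $\E[F(\X_\tau)]$ completes the argument.
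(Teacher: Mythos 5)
Your proof is correct and is essentially the same as the paper's. Both hinge on the two facts that non-positive curvature of $N$ forces $\|\tilde\Theta_{0,s}^{-1}\|\leq 1$ and that the Hessian comparison theorem gives $\Hess(\dist_N(\cdot,o)^2)\geq 2h$ on a Cartan--Hadamard manifold; and both then combine It\^o's isometry (equivalently BDG with $q=2$) with It\^o's formula to reduce $\E[|\A_{\defo}(\X)_\tau|^2]$ to $\E[\int_0^\tau|du|^2\,ds]$ and bound the latter by $\E[\dist_N(\X_\tau,o)^2]$. The only organizational difference is cosmetic: you apply the geometric It\^o formula directly to $F\circ\X$ using that $\X_t=u(t,X_t)$ is an $N$-valued local martingale (so the first-order It\^o term is automatically a local martingale), whereas the paper applies It\^o to the real-valued map $(t,x)\mapsto\rho^2(u(t,x))$ on the evolving base and uses the chain rule for the tension field to isolate the Hessian term after the first-order contributions cancel via the space-time harmonic equation. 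These are the same computation in different coordinates, and both reduce to the same inequality $\frac12\tr(\Hess(\rho^2)(du\otimes du))\geq|du|^2$.
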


\begin{proof}
  Let $\rho(z) := \dist_N(z,
  u(0,x))$. 
  Using the chain rule for the tension field (see e.g.\
  \cite[Lemma~8.7.2]{jost}) and the Hessian comparison theorem (see
  e.g.\ \cite[Satz~7.236]{hackenbroch}) we obtain
  \begin{align*}
    -\frac{\partial (\rho^2 \circ u)}{\partial t} + \frac{1}{2} \Delta(\rho^2 \circ u) & =  -\nabla(\rho^2) \frac{\partial u}{\partial t} + \frac{1}{2} \tr \! \left( \Hess(\rho^2) \circ (du \otimes du) \right) + \frac{1}{2} \nabla(\rho^2) \Delta u\\
    & \geq |d u|^2,
  \end{align*}
  and therefore, using the Burkholder-Davis-Gundy inequality and
  It\^o's formula
  \begin{align*}
    \E \left[ |\A_{\defo}(\X)_\tau|^2 \right] & \leq     \E \left[ \int_0^{\tau} \bigl| \tilde \Theta_{0,s}^{-1} du(s, X_s) \bigr|^2 ds \right]\\
    & \leq  \E \left[ \int_0^{\tau} |du|^2(s, X_s)\,ds \right]\\
    & \leq  \E \left[ \int_0^{\tau} \left( -\frac{\partial (\rho^2 \circ u)}{\partial t} + \frac{1}{2} \Delta(\rho^2 \circ u) \right) (s, X_s) \,ds \right]\\
    & = \E \left[ \rho^2(u(\tau, X_{\tau})) \right],
  \end{align*}
  as claimed.
\end{proof}

\begin{lemma}[cf.\ {\cite[Lemma~4.6]{thalmaierwang}}]
  Assume that $N$ has non-positive curvature and let
  \begin{equation*}
    K(t, x) := \frac{\lambda_1(t,x)}{\sum\limits_{i=2}^m \lambda_i(t,x)}
  \end{equation*}
  (with the convention ${0}/{0} := 0$).
  Then for any stopping time $\tau$
  \[
  \E \left[ |\A_{\defo}(\X)_\tau|^q \right] \leq C_q \E \left[ \left|
      \int_0^{\tau} |du|^2(s, X_s) \exp \left( \int_0^s |du|^2(r,X_r)
        \,\frac{\kappa(u(s,X_s))}{K(s, X_s)} \,dr \right) ds
    \right|^{q/2} \right].
  \]
\end{lemma}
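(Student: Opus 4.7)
The strategy parallels the proof of Lemma~\ref{sf}: apply the Burkholder--Davis--Gundy inequality to the local martingale $\A_{\defo}(\X)$ to obtain
\[
\E\bigl[|\A_{\defo}(\X)_\tau|^q\bigr] \leq C_q\,\E\!\left[\left(\int_0^\tau \bigl|\tilde\Theta_{0,s}^{-1}\,du(s,X_s)\bigr|^2\,ds\right)^{\!q/2}\right]\!,
\]
where $|\cdot|$ denotes the Hilbert--Schmidt norm. It therefore suffices to establish the pointwise estimate
\[
\bigl|\tilde\Theta_{0,s}^{-1}\,du(s,X_s)\bigr|^2 \leq |du|^2(s,X_s)\,\exp\!\left(\int_0^s |du|^2(r,X_r)\,\frac{\kappa(u(r,X_r))}{K(r,X_r)}\,dr\right)
\]
and then to substitute it into the displayed BDG bound.

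To prove this pointwise estimate I would sharpen the computation in the proof of Lemma~\ref{invdamp} by exploiting that the vectors to which $\tilde\Theta_{0,s}^{-1}$ is applied lie in $\im du(s,X_s)\subseteq T_{\X_s}N$, and not in an arbitrary direction. Set $J_s := \tilde\Theta_{0,s}\,\tilde\Theta_{0,s}^{*}$, so that $\bigl|\tilde\Theta_{0,s}^{-1}\,du(s,X_s)\bigr|^2 = \tr\!\bigl(du(s,X_s)\,du(s,X_s)^{*}\,J_s^{-1}\bigr)$. Diagonalizing $du(r,X_r)\,du(r,X_r)^{*}$ in an orthonormal eigenbasis $\tilde e_1(r),\dots,\tilde e_m(r)$ of $\im du(r,X_r)$ with eigenvalues $\lambda_1\geq\cdots\geq\lambda_m\geq 0$, a direction-sensitive version of the curvature inequality used to prove Lemma~\ref{invdamp} — restricting the test vector to $\im du(r,X_r)$ rather than taking it arbitrary in $T_{\X_r}N$ — produces, after weighting by the $\lambda_i$, a drift inequality for $s\mapsto\tr(du\,du^{*}\,J_s^{-1})$ whose curvature correction involves the ratio $\lambda_1/\sum_{i\geq 2}\lambda_i = K(r,X_r)$. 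Gronwall's lemma applied to the resulting scalar differential inequality then yields the exponent $|du|^2\,\kappa/K$, strictly sharper than the exponent $\sum_{i\geq 2}\lambda_i\,\kappa$ that one would obtain from combining Lemma~\ref{invdamp} with the cruder estimate $\bigl|\tilde\Theta_{0,s}^{-1}\,du\bigr|^2\leq\|\tilde\Theta_{0,s}^{-1}\|^2\,|du|^2$.

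The main obstacle is the time-dependence of the distinguished directions: both the eigenbasis $\tilde e_i(r)$ and the subspace $\im du(r,X_r)$ rotate with $r$, so the direction-sensitive estimate cannot be integrated eigendirection by eigendirection in an elementary way. The operator-trace formulation via $J_s$ is designed to bypass this difficulty by converting the direction-dependent curvature bound into a scalar differential inequality for the trace; however, a careful It\^o computation is needed to verify that the infinitesimal variation of $J_s$ (coming from the defining equation of $\tilde\Theta$ and driven by the Riemann curvature tensor of $N$ along $\X$) together with the variation of $du(s,X_s)\,du(s,X_s)^{*}$ along $X_s$ conspires to produce precisely the factor $1/K(r,X_r)$. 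Once this pointwise bound is established, the lemma follows by plugging it into the Burkholder--Davis--Gundy estimate.
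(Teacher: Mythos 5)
Your proposal diverges substantially from the paper's proof, which is a one-liner. The paper applies the Burkholder--Davis--Gundy inequality to obtain
\begin{equation*}
\E\bigl[|\A_{\defo}(\X)_\tau|^q\bigr] \leq C_q\,\E\!\left[\left(\int_0^\tau \bigl|\tilde\Theta_{0,s}^{-1}\,du(s,X_s)\bigr|^2\,ds\right)^{q/2}\right],
\end{equation*}
then uses exactly the crude bound you dismiss, $\bigl|\tilde\Theta_{0,s}^{-1}\,du(s,X_s)\bigr|^2\leq\|\tilde\Theta_{0,s}^{-1}\|^2\,|du|^2(s,X_s)$, inserts the operator-norm estimate of Lemma~\ref{invdamp} (with $L_r=\sum_{i\geq 2}\lambda_i\,\kappa$ in the non-positive curvature case), and rewrites $\sum_{i\geq 2}\lambda_i$ using the definition of $K$. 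Nothing direction-sensitive is involved; Lemma~\ref{invdamp} is used as a black box.

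There is also a genuine gap in your argument: the pointwise estimate
\begin{equation*}
\bigl|\tilde\Theta_{0,s}^{-1}\,du(s,X_s)\bigr|^2 \leq |du|^2(s,X_s)\,\exp\!\left(\int_0^s |du|^2\,\frac{\kappa}{K}\,dr\right)
\end{equation*}
is the entire content of the claimed improvement, and you do not prove it. You correctly identify the obstruction --- both the eigenbasis of $du\,du^*$ and the subspace $\im du(r,X_r)$ rotate with $r$, so the direction-sensitive inequality cannot simply be integrated --- and you propose to bypass it by a Gronwall argument for the scalar quantity $\tr(du\,du^*J_s^{-1})$, but the ``careful It\^o computation'' that would close the loop is only asserted, not carried out. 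The difficulty is worse than stated: $\tilde\Theta_{0,s}$ satisfies a pathwise ODE along $X$, but $du(s,X_s)\,du(s,X_s)^*$ is a genuine It\^o process with a nontrivial martingale part, so the ``drift inequality'' for the trace has contributions that a Lyapunov-type comparison does not control without further work. Without that computation the proposed pointwise bound is a conjecture, not a lemma, and the proof is incomplete. If you want the result without the extra machinery, simply follow the proof of Lemma~\ref{sf} verbatim, replacing $\kappa_+$ by the $\kappa\leq 0$ branch of $L_s$ in Lemma~\ref{invdamp} and substituting the definition of $K$.
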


\begin{proof}
  This follows immediately from the Burkholder-Davis-Gundy inequality,
  Lemma~\ref{invdamp} and the definition of $K$.
\end{proof}

\begin{corollary}\label{bdc}
  Suppose that
  \[
  \frac{\kappa(u(s,x))}{K(s,x)} \leq -b < 0
  \]
  for all $s \geq 0$ and all $x \in M$. Then we have
  \[
  \E \left[ |\A_{\defo}(\X)_\tau|^2 \right] \leq \frac{1}{b} \, \E
  \left[ \left( 1 - \exp \left( -b \int_0^{\tau} |du|^2(r, X_r) \,dr
      \right) \right) \right] \leq \frac{1}{b}.
  \]
\end{corollary}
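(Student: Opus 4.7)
The plan is to specialize the preceding lemma to the exponent $q=2$ and then evaluate the resulting time integral in closed form. First I would take $q=2$ in the preceding lemma: in this case the Burkholder--Davis--Gundy constant collapses to $C_2=1$, since $\A_{\defo}(\X)$ is a stochastic integral against a Brownian motion and $\parr_{0,s}^{\Riem}$ is an isometry, so It\^o's isometry makes the first step in the proof of Lemma~\ref{sf} an equality. That is the only ingredient beyond the statement of the previous lemma.

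Next I would insert the standing hypothesis $\kappa/K\leq -b$ into the exponent in the previous lemma, which yields
\[
\E\bigl[|\A_{\defo}(\X)_\tau|^2\bigr] \leq \E\left[\int_0^\tau |du|^2(s,X_s)\,\exp\!\left(-b\int_0^s |du|^2(r,X_r)\,dr\right)\, ds\right]\!.
\]
The key observation is that the integrand is a perfect differential: setting $\phi(s) := \int_0^s |du|^2(r,X_r)\,dr$, one has $\phi'(s)=|du|^2(s,X_s)$, so the integrand equals $-\frac{1}{b}\,\frac{d}{ds}\,e^{-b\phi(s)}$. Integrating from $0$ to $\tau$ pathwise yields $\frac{1}{b}\bigl(1-e^{-b\phi(\tau)}\bigr)$, and taking expectation produces the first claimed inequality.

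The second bound $\E[\,\cdot\,]\leq 1/b$ is then immediate from the elementary estimate $1-e^{-x}\leq 1$ for $x\geq 0$ together with the nonnegativity of $\phi(\tau)$. There is no real obstacle to this argument; it is essentially a direct computation. The only subtle point is confirming $C_2=1$, which is built into the It\^o isometry rather than the general Burkholder--Davis--Gundy inequality, and the observation that the exponential weight of the previous lemma turns the $ds$-integral into an explicit antiderivative the moment the hypothesis $\kappa/K\leq -b$ is invoked.
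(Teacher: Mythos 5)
Your proof is correct and follows essentially the same route as the paper's: specialize the preceding lemma to $q=2$ with constant $1$, bound the exponent using $\kappa/K \leq -b$, and recognize the integrand as the exact derivative of $-\tfrac{1}{b}\exp(-b\int_0^s |du|^2(r,X_r)\,dr)$. Your explicit remark that the $q=2$ case gives constant $1$ (via the $L^2$ identity for the local martingale rather than the general Burkholder--Davis--Gundy bound) is a small but worthwhile clarification that the paper leaves implicit.
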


\begin{proof}
  Since
  \begin{equation*}
    \frac{d}{ds} \exp \left( -b \int_0^s |du|^2(r,X_r) \,dr \right) = -b\, |du|^2(s, X_s) \exp \left( -b \int_0^s |du|^2(r,X_r) \,dr \right)
  \end{equation*}
  we have
  \begin{align*}
    \int_0^{\tau} |du|^2(s, X_s) \exp &\left( \int_0^s |du|^2(r, X_r) \frac{\kappa(u(s, X_s))}{K(s, X_s)} \, dr \right) ds\\
    &\leq \frac{1}{b} - \frac{1}{b} \exp \left( -b \int_0^{\tau}
      |du|^2(r, X_r) \,dr \right). \qedhere
  \end{align*}
\end{proof}

\section{Liouville theorems}
In this section we derive Liouville type results for space-time
harmonic mappings, respectively ancient solutions to the harmonic map
heat flow. We work out details in three typical cases: Mappings of
sub-square-root growth, of bounded dilatation, and of small
image. From now on we suppose that $M$ is connected.

\subsection{Space-time harmonic maps of sub-square-root growth}
We say that a function $u: \R_+\times M \to N$ is of
\emph{sub-square-root growth} if for each $x \in M$ there exists a
function $\varphi: \R_+ \to \R_+$ with
\begin{equation}\label{ssqdef}
  \varphi(r)/\sqrt{r} \to 0
\end{equation}
as $r \to \infty$ such that for all $t\geq0$ and all $z\in M$
\begin{equation}\label{sublinear}
  \dist_N \! \left( u(t,z), u(0,x) \right) \leq \varphi(d_{g(t)} (z,x)).
\end{equation}

\begin{theorem}
  \label{subwurzel}
  Suppose that $M$ is connected,
  \begin{equation*}
    \frac{\partial g}{\partial t} \leq \Ric_{g(t)}
  \end{equation*}
  (backward super Ricci flow), that for each $x \in M$ there exists
  $r_0 > 0$ such that the constant $C(x, r_0)$ defined in \eqref{cxr0}
  is finite, and that $N$ is simply-connected and has non-positive
  sectional curvatures. Then every space-time harmonic mapping
  $u\colon \R_+\times M \to N$ of sub-square-root growth is constant.
\end{theorem}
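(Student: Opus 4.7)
The idea is to apply the representation formula \eqref{repformula} on the localised region bounded by $\tau := t\wedge\tau_R$, split the right-hand side by Cauchy--Schwarz ($p=q=2$ in \eqref{estimate}), and then let first $t\to\infty$ and then $R\to\infty$.

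Fix $x\in M$ and a unit vector $v\in T_xM$. For $R>0$ and $t>0$, I would take $\ell$ to be the process constructed in the proof of Lemma~\ref{firstfactor} with $p=2$: by construction $\ell(0)=v$, $\ell$ vanishes from time $\sigma(t)$ onwards, and \eqref{Eq:l-integrability} is satisfied. Since $f\leq 1$ implies $T(s)\geq s$, one has $\sigma(t)\leq t\wedge\tau_R$, so $\ell(t\wedge\tau_R)=0$ and \eqref{repformula} is applicable. Lemma~\ref{firstfactor} then controls the first factor:
\begin{equation*}
\E\left[\left|\int_0^{t\wedge\tau_R}\left(\parr_{0,s}^{\Riem}\right)^{-1}\Theta_{0,s}\,\dot\ell(s)\cdot dB_s\right|^2\right]\leq\frac{C_2\,c_2(R)}{\left(1-e^{-c_2(R)t}\right)^2}.
\end{equation*}
For the second factor, since $N$ is simply connected with non-positive curvature, Lemma~\ref{ch} gives
\begin{equation*}
\E\left[|\A_{\defo}(\X)_{t\wedge\tau_R}|^2\right]\leq\E\left[\dist_N\bigl(u(t\wedge\tau_R,X_{t\wedge\tau_R}),u(0,x)\bigr)^2\right],
\end{equation*}
and the sub-square-root growth condition (with $\varphi$ replaced by $r\mapsto\sup_{s\leq r}\varphi(s)$, which still satisfies \eqref{ssqdef}) bounds this by $\varphi(R)^2$, because $d_{g(s)}(x,X_s)\leq R$ for $s\leq\tau_R$.

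Combining these two estimates and letting $t\to\infty$ yields $|du(0,x)v|\leq\sqrt{C_2\,c_2(R)}\,\varphi(R)$. The crucial quantitative input is Lemma~\ref{cprlemma}: $c_2(R)=O(1/R)$ as $R\to\infty$. Together with $\varphi(R)/\sqrt{R}\to 0$, this forces $\sqrt{c_2(R)}\,\varphi(R)\to 0$ as $R\to\infty$, so $du(0,x)v=0$. Since $v$ and $x$ were arbitrary and $M$ is connected, $u(0,\cdot)$ is constant on $M$.

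To upgrade this to constancy on all of $\R_+\times M$, I would repeat the argument after shifting the time origin: for any $t_0\geq 0$, the map $\tilde u(s,z):=u(t_0+s,z)$ is space-time harmonic for $\tilde g(s):=g(t_0+s)$; backward super Ricci flow and the local Ricci bound \eqref{cxr0} are manifestly invariant under this shift, and sub-square-root growth at $x$ is preserved via the triangle inequality $\dist_N(\tilde u(s,z),\tilde u(0,x))\leq\varphi(d_{\tilde g(s)}(z,x))+\dist_N(u(0,x),u(t_0,x))$. Hence $u(t_0,\cdot)$ is constant for every $t_0\geq 0$, and plugging into \eqref{Eq:spacetimeharmonic} forces $\partial_t u\equiv 0$ as well. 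The main obstacle I anticipate is not conceptual but book-keeping: verifying $\ell(\tau)=0$ and the integrability condition \eqref{Eq:l-integrability} for $\tau=t\wedge\tau_R$ (rather than $\tau=\sigma(t)$) with the specific $\ell$ of Lemma~\ref{firstfactor}, after which everything else follows mechanically from the lemmas already established.
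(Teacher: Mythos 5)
Your proof is correct and follows essentially the same route as the paper: combine the $L^2$--$L^2$ H\"older splitting in \eqref{estimate} with Lemma~\ref{firstfactor} for the first factor, Lemma~\ref{ch} plus sub-square-root growth for the second, and then invoke $c_p(R)=O(1/R)$ from Lemma~\ref{cprlemma} together with $\varphi(R)/\sqrt{R}\to 0$. You are, if anything, more careful than the paper's own proof on a few book-keeping points: you consistently work with the bounded stopping time $t\wedge\tau_R$ (as required by Lemma~\ref{ch}, which is stated only for bounded stopping times), whereas the paper writes $\tau_R$; you note that $\varphi$ should be monotonised so that $\dist_N(u(\tau,X_\tau),u(0,x))\leq\varphi(R)$ for $\tau\leq\tau_R$; and you make explicit the time-shift argument needed to pass from $du(0,\cdot)\equiv 0$ to constancy of $u$ on all of $\R_+\times M$, a step the paper leaves implicit. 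These are genuine small gaps in the paper's exposition that your write-up closes, but the underlying argument is identical.
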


\begin{proof}
  By Lemma~\ref{firstfactor}, Lemma~\ref{ch} and \eqref{sublinear} we
  have for each $R > 0$
  \begin{align*}
    |du(0,x) v|^2 & \leq  c(R)\, |v|^2 \,\E \left[ \dist_N(u( \tau_R, X_{\tau_R}), u(0,x))^2 \right]\\
    & \leq  c(R)\, |v|^2 \,\E \left[ \varphi^2(d_{g(\tau_R)}(X_{\tau_R}, x)) \right]\\
    & = c(R)\, |v|^2 \,\varphi(R)^2.
  \end{align*}
  The claim now follows by letting $R \to \infty$, taking into account
  Lemma~\ref{cprlemma} and \eqref{ssqdef}.
\end{proof}

Analogously, a mapping $u\colon (-\infty,T]\times M \to N$ is said to
be of sub-square-root growth if \eqref{sublinear} holds for all $(t,
z) \in (-\infty,T] \times M$.

\begin{corollary}
  Suppose that $M$ is connected and that
  \begin{equation*}
    \frac{\partial g}{\partial t} \geq -\Ric_{g(t)}\quad\text{on\/ $(-\infty,T]\times M$}
  \end{equation*}
  (forward super Ricci flow). Assume that for each $x \in M$ there
  exists $r_0 > 0$ such that $C(x, r_0)$ as defined in \eqref{cxr0} is
  finite, and that $N$ is simply-connected and has non-positive
  sectional curvatures. Then any ancient solution of sub-square-root
  growth $u\colon (-\infty,T]\times M \to N$ to the harmonic map heat
  flow 
  is constant.
\end{corollary}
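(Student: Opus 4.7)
The plan is to reduce the corollary to Theorem~\ref{subwurzel} via the time reversal described in the Remark following \eqref{Eq:heatequation}. Given an ancient solution $u\colon(-\infty,T]\times M\to N$ to \eqref{Eq:heatequation}, I would set
\[
\hat u(t,z):=u(T-t,z),\qquad \hat g(t):=g(T-t),\qquad t\in[0,\infty),
\]
so that by the Remark $\hat u$ is space-time harmonic on $[0,\infty)\times M$ in the sense of \eqref{Eq:spacetimeharmonic} with respect to the family $(\hat g(t))_{t\geq0}$. The bulk of the work is then to check that each hypothesis of Theorem~\ref{subwurzel} is preserved under this time reversal; the curvature assumptions on $N$ are unchanged, so only the conditions on $M$ and on the growth of $u$ need to be translated.

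For the curvature of $M$, the forward super Ricci assumption $\partial_s g\geq-\Ric_{g(s)}$ on $(-\infty,T]$ becomes, via the chain rule $\partial_t\hat g(t)=-\partial_s g(s)|_{s=T-t}$,
\[
\frac{\partial\hat g}{\partial t}\leq\Ric_{g(T-t)}=\Ric_{\hat g(t)}\quad\text{on $[0,\infty)\times M$,}
\]
i.e.\ backward super Ricci flow for $\hat g$. Likewise, since $d_{\hat g(t)}(x,y)=d_{g(T-t)}(x,y)$ and $|\Ric_{\hat g(t)}(y)|=|\Ric_{g(T-t)}(y)|$, the quantity $C(x,r_0)$ associated with $\hat g$ on $[0,\infty)$ coincides with the analogous supremum for $g$ over $(-\infty,T]$, which is finite by hypothesis.

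The one mildly non-trivial point, and the step I would expect to require an extra line of argument, is the transfer of the sub-square-root growth condition, because the reference point in \eqref{sublinear} shifts: for $u$ it is $u(0,x)$, but for $\hat u$ it has to be $\hat u(0,x)=u(T,x)$. If $\varphi$ is a growth function witnessing sub-square-root growth of $u$ at $x$, then for every $(t,z)\in[0,\infty)\times M$ one has $T-t\in(-\infty,T]$, so the triangle inequality gives
\[
\dist_N\bigl(\hat u(t,z),\hat u(0,x)\bigr)\leq\dist_N\bigl(u(T-t,z),u(0,x)\bigr)+\dist_N\bigl(u(0,x),u(T,x)\bigr)\leq\varphi\bigl(d_{\hat g(t)}(z,x)\bigr)+C_x,
\]
with $C_x:=\dist_N(u(0,x),u(T,x))<\infty$. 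The shifted function $\hat\varphi(r):=\varphi(r)+C_x$ still satisfies $\hat\varphi(r)/\sqrt r\to0$, so $\hat u$ is of sub-square-root growth in the sense of \eqref{ssqdef}--\eqref{sublinear}.

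With every hypothesis verified, Theorem~\ref{subwurzel} applied to $\hat u$ yields that $\hat u$ is constant, and consequently $u$ itself is constant. The argument is essentially bookkeeping for the time reversal; no genuine analytic obstacle arises beyond the triangle-inequality adjustment of the growth function just described.
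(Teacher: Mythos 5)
Your proof is correct and follows exactly the route the paper intends: the corollary is stated without proof because it is an immediate consequence of Theorem~\ref{subwurzel} via the time reversal $\hat u(t,\newdot)=u(T-t,\newdot)$, $\hat g(t)=g(T-t)$ set out in the Remark following \eqref{Eq:heatequation}. Your careful check that the curvature hypothesis, the finiteness of $C(x,r_0)$, and (via the triangle inequality) the sub-square-root growth condition all transfer under this reversal is the right bookkeeping, and the observation that shifting the reference point only perturbs $\varphi$ by an additive constant, which is harmless for \eqref{ssqdef}, closes the one genuinely non-automatic step.
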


\begin{remark}
  Theorem~\ref{subwurzel} should be compared with S.-Y.~Cheng's
  Liouville theorem \cite{Cheng:80} which gives an analogous statement
  for harmonic maps of sublinear growth when $M$ is equipped with a
  fixed metric of non-negative Ricci curvature, see
  \cite[Corollary~5.10]{thalmaierwang} and \cite{Stafford:90} for
  stochastic proofs.  All these proofs depend crucially on the
  Laplacian comparison theorem.  In the case of backward super Ricci
  flow the stronger assumption of sub-square-root growth is needed
  because estimate \eqref{cpr} is weaker than estimate~\eqref{lct} in
  the case of a fixed metric with non-negative Ricci curvature, see
  the discussion in Remark~\ref{remest}.
\end{remark}

\subsection{Space-time harmonic maps of bounded dilatation}
Let $u\colon \R_+\times M \to N$ be a space-time harmonic map. We say
that $u$ is of {\em bounded dilatation} if there is a real constant
$C$ such that
\begin{equation}\label{Eq:Boundeddilatation}
  \lambda_1(t,x) \leq C \sum_{i=2}^m \lambda_i(t,x)
\end{equation}
for all $(t, x) \in \R_+\times M$. Similarly, an ancient solution
$u\colon (-\infty,T]\times M \to N$ to the harmonic map heat flow is
said to be of bounded dilatation if \eqref{Eq:Boundeddilatation} holds
for all $(t, x) \in (-\infty,T]\times M$.

\begin{theorem}[cf. {\cite[Corollary~5.15]{thalmaierwang}} for harmonic maps in the case of a fixed metric]
  Suppose that $M$ is connected,
  \begin{equation*}
    \frac{\partial g}{\partial t} \leq \Ric_{g(t)}
  \end{equation*}
  (backward super Ricci flow), that for each $x \in M$ there exists
  $r_0 > 0$ such that $C(x, r_0)$ is finite, and that $N$ has
  uniformly strictly negative sectional
  curvatures. 
  Then any space-time harmonic map $u\colon \R_+\times M \to N$ of
  bounded dilatation is constant.
\end{theorem}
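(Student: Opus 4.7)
The plan is to apply the product estimate~\eqref{estimate} with $p = q = 2$, using Lemma~\ref{firstfactor} to bound the first factor and Corollary~\ref{bdc} to bound the second; the bounded dilatation hypothesis on $u$ together with the strictly negative sectional curvature of $N$ are tailor-made to feed Corollary~\ref{bdc}, while the backward super Ricci flow condition and the local Ricci bound are precisely what power Lemma~\ref{firstfactor} through Lemma~\ref{cprlemma}.

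First I would check that the hypothesis of Corollary~\ref{bdc} is satisfied. Writing $\kappa(y) \leq -k$ with $k > 0$ for the uniformly strict negative curvature of $N$, the bounded dilatation condition~\eqref{Eq:Boundeddilatation} gives $K(s,x) \leq C$ wherever $du(s,x) \neq 0$, so
\[
\frac{\kappa(u(s,x))}{K(s,x)} \leq -\frac{k}{C} =: -b < 0
\]
at those points. At points where $du = 0$ the relevant integrand $|du|^{2}\,\kappa/K$ vanishes, so the convention $0/0 := 0$ causes no problem. Corollary~\ref{bdc} then yields the \emph{uniform} bound $\E[|\A_{\defo}(\X)_{\tau}|^{2}] \leq 1/b$ for every stopping time~$\tau$.

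Fixing $x \in M$, $v \in T_{x}M$, $R > 0$ and $t > 0$, I would plug the process $\ell$ constructed in the proof of Lemma~\ref{firstfactor} into~\eqref{estimate} with $\tau = t \wedge \tau_{R}$. Combining Lemma~\ref{firstfactor} at $p = 2$ with the bound just obtained yields
\[
|du(0,x)\,v| \leq \frac{\sqrt{C_{2}\,c_{2}(R)}}{1 - \exp(-c_{2}(R)\,t)}\,\frac{|v|}{\sqrt{b}}.
\]
Sending $t \to \infty$ kills the denominator, and then sending $R \to \infty$ kills the numerator by $c_{2}(R) = O(1/R)$ from Lemma~\ref{cprlemma}; hence $du(0,x) = 0$. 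Since $x$ was arbitrary and $M$ is connected, $u(0,\cdot)$ is constant. All hypotheses are invariant under time translation, so running the same argument with initial time shifted to any $t_{0} \geq 0$ gives $du(t_{0}, \cdot) \equiv 0$, after which the space-time harmonic equation~\eqref{Eq:spacetimeharmonic} forces $\partial u/\partial t \equiv 0$ and $u$ is constant.

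The main, still modest, obstacle I foresee is the case distinction around $K = 0$ and a careful verification that the $\ell$ from Lemma~\ref{firstfactor} meets the integrability condition~\eqref{Eq:l-integrability} with $\tau = t \wedge \tau_{R}$; both should follow from the construction, since Lemma~\ref{firstfactor} already handles them implicitly.
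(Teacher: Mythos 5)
Your proof is correct and follows essentially the same route as the paper: apply~\eqref{estimate} with $p=q=2$, feed the bounded-dilatation and curvature hypotheses into Corollary~\ref{bdc} to bound the second factor by $1/b$, use Lemma~\ref{firstfactor} (via Lemma~\ref{cprlemma}) for the first factor, and let $t\to\infty$ then $R\to\infty$. Your explicit identification $b = k/C$ (writing $\kappa\leq -k$ and $K\leq C$) is correct; note that the paper's proof states the hypothesis of Corollary~\ref{bdc} with the ratio $\kappa/K^2$, which is a typo for $\kappa/K$ and is what you correctly use. You also make two small points explicit that the paper leaves implicit: the harmless role of the $0/0$ convention where $du=0$, and the time-translation argument showing that $du(t_0,\newdot)\equiv 0$ for every $t_0\geq 0$ and hence, via~\eqref{Eq:spacetimeharmonic}, that $u$ is constant in time as well as in space. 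These additions tighten the argument without changing its substance.
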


\begin{proof}
  The assumptions on the curvature of $N$ and the dilatation of $u$
  imply that there is a constant $b > 0$ such that
  \[
  \frac{\kappa(u(t,x))}{K^2(t,x)} \leq -b < 0
  \]
  for all $t\geq0$ and all $x \in M$. Lemma~\ref{firstfactor} and
  Corollary~\ref{bdc} then imply that
  \begin{equation*}
    |du(0,x) v|^2 \leq \frac{C_2 c(R) |v|^2}{b(1-\exp(-c(R) t))^2}.
  \end{equation*}
  Letting first $t \to \infty$ and then $R \to \infty$, one obtains
  that $du(0,x) = 0$.
\end{proof}

\begin{corollary}
  Suppose that $M$ is connected,
  \begin{equation*}
    \frac{\partial g}{\partial t} \geq -\Ric_{g(t)}\quad\text{on $(-\infty,T]\times M$}
  \end{equation*}
  (forward super Ricci flow), that for each $x \in M$ there exists
  $r_0 > 0$ such that $C(x, r_0)$ (defined in \eqref{cxr0}) is finite,
  and that $N$ has uniformly strictly negative sectional
  curvatures. Then any ancient solution of bounded dilatation $u\colon
  (-\infty,T]\times M \to N$ to the harmonic map heat flow is
  constant.
\end{corollary}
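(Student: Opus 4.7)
My plan is to reduce this corollary directly to the preceding theorem by invoking the time-reversal correspondence explained in the Remark following Eq.~\eqref{Eq:heatequation}. Given the ancient solution $u\colon(-\infty,T]\times M\to N$ to the harmonic map heat flow, I would set
\[
\hat u(t,\newdot):=u(T-t,\newdot)\quad\text{and}\quad\hat g(t):=g(T-t)\quad\text{for $t\geq0$,}
\]
so that the Remark yields immediately that $\hat u\colon[0,\infty)\times M\to N$ is space-time harmonic with respect to the family $\hat g(t)$ of Riemannian metrics on~$M$.

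The main task then is to verify that the hypotheses of the preceding theorem hold for $\hat u$ and $\hat g$. First, since $\frac{\partial\hat g}{\partial t}(t)=-\frac{\partial g}{\partial s}(T-t)$, the forward super Ricci flow condition $\frac{\partial g}{\partial t}\geq-\Ric_{g(t)}$ on $(-\infty,T]\times M$ translates into the backward super Ricci flow $\frac{\partial\hat g}{\partial t}\leq\Ric_{\hat g(t)}$ on $\R_+\times M$. Second, the finiteness of $C(x,r_0)$ in \eqref{cxr0} is preserved under the relabeling $t\mapsto T-t$, since the supremum involves exactly the same values of $|\Ric|$ along the corresponding $\hat g$-balls as along the original $g$-balls. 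Third, the curvature assumption on $N$ is unchanged. Finally, the eigenvalues $\lambda_i(t,x)$ of $du(t,x)^{*}du(t,x)$ (with the adjoint taken using $g(t)$ on $M$ and $h$ on $N$) satisfy $\hat\lambda_i(t,x)=\lambda_i(T-t,x)$, so the bounded dilatation condition \eqref{Eq:Boundeddilatation} for $u$ on $(-\infty,T]\times M$ transfers verbatim into the same condition for $\hat u$ on $\R_+\times M$.

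With these verifications in place, the preceding theorem applied to $\hat u$ yields that $\hat u$ is constant, and hence so is $u$. The only real obstacle is bookkeeping: carefully tracking signs under the time reversal so that the forward/backward super Ricci flow inequalities match, and checking that the quantities $C(x,r_0)$ and $\lambda_i(t,x)$ are invariant (up to relabeling) under the transformation $t\mapsto T-t$. No additional analytic work beyond what is already established in the preceding theorem is required.
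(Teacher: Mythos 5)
Your proposal is correct and takes exactly the approach the paper intends: the corollary is a direct consequence of the preceding theorem by the time-reversal correspondence $\hat u(t,\newdot)=u(T-t,\newdot)$, $\hat g(t)=g(T-t)$ established in the Remark after Eq.~\eqref{Eq:heatequation}, and the paper accordingly leaves the proof implicit. Your sign bookkeeping (forward super Ricci flow for $g$ becoming backward super Ricci flow for $\hat g$) and the checks on $C(x,r_0)$, the target curvature, and the dilatation condition are all accurate.
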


\subsection{Space-time harmonic maps of small image}
Let $(N,h)$ be a Riemannian manifold and $\lambda>0$. Recall that an
$N$-valued martingale $Y$ is said to have \emph{exponential moments of
  order} $\lambda$ if
\begin{equation*}
  \E\left[\exp\Bigl(\lambda\int_0^\infty h(dY_s,dY_s)\Bigr)\right]<\infty.
\end{equation*}

\begin{remark}\label{R:ExpMoments}
  Let $(N,h)$ be a Riemannian manifold, $B\subset N$ an open subset
  and $\lambda>0$.  Suppose that there is a real-valued $C^2$ function
  $f$ on $B$ satisfying $c_1\leq f\leq c_2$ for some positive
  constants $c_1$, $c_2$ such that
  \begin{equation}\label{Eq:HessEst}
    \nabla df+2\lambda f\leq0.
  \end{equation}
  Then every $N$-valued martingale taking its values in $B$ has
  exponential moments of order $\lambda$, see
  \cite[Proposition~2.1.2]{Picard:91},
  \cite[Remark~5.2]{thalmaierwang}.
\end{remark}

\begin{definition}
  Let $(N,h)$ be a Riemannian manifold, $y\in N$ a point, and
  $B=B(y,r)$ an open geodesic ball about $y$ of radius $r$. Such a
  geodesic ball is said to be \textit{regular} if it does not meet the
  cut locus of its center $y$ and if $\kappa<(\pi/2r)^2$ where
  $\kappa$ denotes an upper bound of the sectional curvatures of~$N$
  on $B(y,r)$.
\end{definition}

\begin{example}\label{Ex:GeodBall}
  Suppose that $B(y,r)$ is a relatively compact regular geodesic ball
  in $N$ such that $r<\pi/(2\sqrt\kappa)$ where $\kappa>0$ is an upper
  bound of the sectional curvatures of $N$ on $B(y,r)$. Let
  $f=\cos\left(\sqrt{\kappa q}\,d(y,\newdot)\right)$ where $q>1$ is
  chosen in such a way that $0<c_1\leq f$ holds on $B(y,r)$ for some
  $c_1>0$. Then
  \begin{equation*}
    \nabla df + \kappa q \, f \leq 0,
  \end{equation*}
  which by Remark \ref{R:ExpMoments} means that any $B(y,r)$-valued
  martingale has exponential moments of order $\kappa q/2$.
\end{example}

\begin{theorem}[cf. {\cite[Corollary~5.5]{thalmaierwang}} for harmonic maps in the case of a fixed metric]\label{C:SmallImage}
  Suppose that $M$ is connected and
  \begin{equation*}
    \frac{\partial g}{\partial t} \leq \Ric_{g(t)}
  \end{equation*}
  (backward super Ricci flow) and that for each $x \in M$ there exists
  $r_0 > 0$ such that $C(x, r_0)$ is finite. Let $B$ be a relatively
  compact regular geodesic ball in $N$ of radius $r$ such that $r <
  \pi / (2 \sqrt\kappa)$ where $\kappa>0$ is an upper bound of the
  sectional curvatures of $N$ on $B$.  Then any space-time harmonic
  map $u\colon \R_+\times M \to N$ taking its values in $B$ is
  constant.
\end{theorem}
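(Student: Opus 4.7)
The approach will follow the same two-factor strategy as the previous Liouville theorems, starting from estimate \eqref{estimate}. The key insight specific to this setting is that, since $u$ takes values in a regular geodesic ball $B$, the image process $\X_t := u(t, X_t)$ is an $N$-valued martingale confined to $B$, and hence enjoys high-order exponential moments by Example~\ref{Ex:GeodBall}. This furnishes the crucial uniform bound on the second factor in \eqref{estimate}.

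\textbf{Step 1: choice of exponent and exponential moments.} Because $r < \pi/(2\sqrt{\kappa})$, the value $\pi^2/(4\kappa r^2)$ exceeds $1$, so I can pick $q \in (1, \min(2, \pi^2/(4\kappa r^2)))$ and set $p := q/(q-1) \geq 2$. Applying Example~\ref{Ex:GeodBall} with this $q$ yields a $C^2$ function $f$ on $B$ bounded away from $0$ and satisfying $\nabla df + \kappa q\, f \leq 0$, so by Remark~\ref{R:ExpMoments} every $B$-valued martingale, and in particular $\X$, has exponential moments of order $\kappa q/2$. Writing $A_s := \int_0^s |du|^2(r, X_r)\, dr$ for the quadratic variation of $\X$ in $N$, this gives $\E[\exp(\kappa q A_\infty/2)] < \infty$.

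\textbf{Step 2: estimation of the second factor.} Apply the Burkholder--Davis--Gundy inequality and Lemma~\ref{invdamp}. On $B$ the sectional curvature is bounded above by $\kappa$, so $\kappa_+(u(s, X_s)) \leq \kappa$ and $\|\tilde\Theta_{0,s}^{-1}\|^2 \leq \exp(\kappa A_s)$. Consequently
\begin{equation*}
\int_0^{t \wedge \tau_R} \|\tilde\Theta_{0,s}^{-1}\, du(s, X_s)\|^2\, ds \leq \int_0^{A_{t \wedge \tau_R}} e^{\kappa a}\, da = \frac{e^{\kappa A_{t \wedge \tau_R}} - 1}{\kappa},
\end{equation*}
and raising to the $q/2$ power gives the bound $\kappa^{-q/2} \exp(\kappa q A_\infty/2)$. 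Combining with BDG,
\begin{equation*}
\E \left[ |\A_{\defo}(\X)_{t \wedge \tau_R}|^q \right] \leq C_q\, \kappa^{-q/2}\, \E \left[ \exp(\kappa q A_\infty/2) \right] < \infty,
\end{equation*}
which is uniform in $t$ and $R$.

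\textbf{Step 3: combine and conclude.} For the first factor, Lemma~\ref{firstfactor} applies since $p \geq 2$ and yields a bound of the form $C_p (2c_p(R)/p)^{p/2} (1-e^{-2c_p(R) t/p})^{-p/2-1} |v|^p$. Plugging into \eqref{estimate}, letting $t \to \infty$ collapses the denominator to $1$, and letting $R \to \infty$ drives $c_p(R) \to 0$ by Lemma~\ref{cprlemma}, so $|du(0, x) v| = 0$ for every $v \in T_x M$. Repeating the argument with an arbitrary base time $t_0 \geq 0$ in place of $0$ (the hypotheses are time-invariant) gives $du \equiv 0$ on $\R_+ \times M$; the space-time harmonic equation \eqref{Eq:spacetimeharmonic} then forces $\partial_t u \equiv 0$ as well. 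Since $M$ is connected, $u$ is constant.

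\textbf{Main obstacle.} The delicate point is arranging the exponents so that all ingredients fit together: one needs $q > 1$ large enough for the exponential moments of Example~\ref{Ex:GeodBall} to control the quadratic variation integral, yet $q \leq 2$ so that the conjugate exponent $p \geq 2$ is admissible in Lemma~\ref{firstfactor}. The strict inequality $r < \pi/(2\sqrt{\kappa})$ is precisely what leaves room for such a choice, and it is where the \emph{regularity} of the geodesic ball enters in a decisive way.
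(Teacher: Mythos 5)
Your proof is correct and follows essentially the same route as the paper's: the two-factor decomposition via estimate~\eqref{estimate}, Lemma~\ref{firstfactor} for the first factor, Lemma~\ref{invdamp} together with Burkholder--Davis--Gundy for the second factor (which the paper packages as Lemma~\ref{sf}), and Example~\ref{Ex:GeodBall}/Remark~\ref{R:ExpMoments} to obtain the exponential moments of the $B$-valued image martingale $\X$. The only cosmetic differences are that you spell out the admissible range $q\in(1,\min(2,\pi^2/(4\kappa r^2)))$ explicitly, and that you make explicit the final time-shift step passing from $du(0,\cdot)\equiv 0$ to $du\equiv 0$, which the paper leaves implicit.
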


\begin{proof}
  Let $q \in (1, 2]$ be as in Example \ref{Ex:GeodBall} and $p$ be
  such that $1/p + 1/q = 1$. Since by Lemma~\ref{firstfactor} the
  $L^p$-norm term on the right-hand side of \eqref{estimate} tends to
  $0$ as $t\to\infty$ and then $R\to\infty$, it is sufficient to show
  that $\E \bigl[ |\A_{\defo}(\X)_{t \wedge \tau_R}|^q \bigr]$ is
  bounded uniformly in $t$ and $R$. By Lemma~\ref{sf} we have
  \[
  \E \left[ |\A_{\defo}(\X)_{t \wedge \tau_R}|^q \right] \leq C_q \,
  \E \left[ \left( \int_0^{t \wedge \tau_R} \rho(s) \exp \left(
        \int_0^s \rho(r) \,\kappa_+(\tilde X_r) \,dr \right) ds
    \right)^{q/2} \right],
  \]
  where $\rho(s) := \vert du\vert^2(s,X_s)$. Since by assumption
  $\kappa_+(\tilde X_r)\leq\kappa$ for a constant $\kappa>0$, and
  using
  \begin{equation*}
    \rho(s) \exp \left\{ \kappa \int_0^s \rho(r) \, dr \right\} = \frac{1}{\kappa} \frac{d}{ds} \exp \left\{ \kappa \int_0^s \rho(r) \, dr \right\},
  \end{equation*}
  we finally obtain
  \begin{align*}
    \E \left[ |\A_{\defo}(\X)_{t \wedge \tau_R}|^q \right] & \leq C_q \E \left[ \left\{ \frac{1}{\kappa} \, \left( \exp \left\{ \kappa \int_0^\tau \vert du \vert^2(r, X_r) \, dr \right\} - 1 \right) \right\}^{q/2} \right]\\
    & \leq \frac{C_q}{\kappa^{q/2}} \E \left[ \left( \exp \left\{
          \kappa \int_0^\infty \vert du\vert^2(r, X_r) \, dr \right\}
        - 1 \right)^{q/2} \right].
  \end{align*}
  The last term is finite since by Example~\ref{Ex:GeodBall} the
  martingale $\tilde X_t=u(t,X_t)$ has exponential moments of order
  $\kappa q/2$, which is equivalent to say that
  \begin{equation*}
    \E\left[\exp\left(\frac{\kappa q}2\int_0^\infty\vert du\vert^2(r,X_r)\,dr\right) \right]<\infty.\qedhere
  \end{equation*}
\end{proof}

\begin{corollary}
  Suppose that $M$ is connected and
  \begin{equation*}
    \frac{\partial g}{\partial t} \geq -\Ric_{g(t)}\quad\text{on $(-\infty,T]\times M$}
  \end{equation*}
  (forward super Ricci flow). Let $B$ be a relatively compact regular
  geodesic ball in $N$ of radius $r$ such that $r < \pi / (2
  \sqrt\kappa)$ where $\kappa>0$ is an upper bound of the sectional
  curvatures of $N$ on $B$. Then any ancient solution $u \colon
  (-\infty,T]\times M \to N$ to the harmonic map heat flow taking
  values in $B$ is constant.
\end{corollary}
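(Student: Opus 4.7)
The plan is to reduce this corollary to Theorem~\ref{C:SmallImage} by invoking the time-reversal correspondence stated in the Remark at the beginning of Section~2. This mirrors the strategy used for the two preceding corollaries in this section, so no new analytic machinery is needed.

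Specifically, I would set $\hat u(t,\newdot):=u(T-t,\newdot)$ and $\hat g(t):=g(T-t)$ for $t\in[0,\infty)$. By the aforementioned remark, $\hat u\colon\R_+\times M\to N$ is then space-time harmonic with respect to the family $(\hat g(t))_{t\geq0}$ of metrics, and it takes its values in the same relatively compact regular geodesic ball $B$ as $u$; in particular, the target-space sectional curvature bound $\kappa$ on $B$ is intrinsic to $N$ and hence unchanged under the time reversal.

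Next I would verify the remaining hypotheses of Theorem~\ref{C:SmallImage} for the pair $(\hat g,\hat u)$. Since $\frac{\partial\hat g}{\partial t}(t)=-\frac{\partial g}{\partial t}(T-t)$ while $\Ric_{\hat g(t)}=\Ric_{g(T-t)}$, the forward super Ricci flow assumption $\frac{\partial g}{\partial t}\geq-\Ric_{g(t)}$ on $(-\infty,T]$ translates precisely into the backward super Ricci flow assumption $\frac{\partial\hat g}{\partial t}\leq\Ric_{\hat g(t)}$ on $[0,\infty)$. The local boundedness of the Ricci curvature encoded in the finiteness of $C(x,r_0)$ is symmetric in this sense and transfers immediately under the change of variable $s=T-t$.

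Applying Theorem~\ref{C:SmallImage} to $\hat u$ then yields that $\hat u$ is constant, and consequently so is $u$. The only point requiring care — the ``main obstacle'', such as it is — is keeping track of the sign flip in the parabolic condition under time reversal; aside from this bookkeeping, the result is an immediate consequence of the space-time harmonic version already established.
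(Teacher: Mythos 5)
Your proposal is correct and takes exactly the route the paper intends: the paper gives no explicit proof of this corollary, relying instead on the time-reversal remark from the beginning of Section~2, and your reduction of the forward super Ricci flow condition on $(-\infty,T]$ to the backward condition on $[0,\infty)$ via $\hat g(t)=g(T-t)$, followed by an application of Theorem~\ref{C:SmallImage}, is precisely that argument. One small point you handled more carefully than the printed statement: the corollary as stated omits the hypothesis that for each $x\in M$ there exists $r_0>0$ with $C(x,r_0)<\infty$ (which appears in the two preceding corollaries and is required by Theorem~\ref{C:SmallImage}); you correctly treat it as present and note that it transfers under the substitution $s=T-t$.
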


\end{document}